\theoremstyle{plain}
\newtheorem{thm}{Theorem}
\newtheorem{lem}{Lemma}
\theoremstyle{definition}
\newtheorem*{exa*}{Example}
\newtheorem{case}{Case}
\crefname{thm}{Theorem}{theorems}
\crefname{lem}{Lemma}{lemmas}
\newcommand{\ds}{\displaystyle}
\newcommand{\bs}{\boldsymbol}
\newcommand{\mb}{\mathbb}
\newcommand{\mc}{\mathcal}
\renewcommand{\mod}{\operatorname{mod}}
\def \a{\alpha} \def \b{\beta} \def \d{\delta} \def \e{\varepsilon} \def \g{\gamma} \def \k{\kappa} \def \l{\lambda} \def \s{\sigma} \def \t{\theta} \def \z{\zeta}
\numberwithin{equation}{section}
\renewcommand{\labelenumi}{\setlength{\labelwidth}{\leftmargin}
   \addtolength{\labelwidth}{-\labelsep}
   \hbox to \labelwidth{\theenumi.\hfill}}
\begin{document}
\title{Fractional parts of polynomials over the primes. II}
\author[Roger Baker]{Roger Baker$^\dag$}
\thanks{$^\dag$Research supported in part by Collaboration Grant 412557 from the Simons Foundation}

\address{Department of Mathematics\newline
\indent Brigham Young University\newline
\indent Provo, UT 84602, U.S.A}
\email{baker@math.byu.edu}

\dedicatory{\rm To Glyn Harman on his sixtieth birthday.}

 \begin{abstract}
Let $\|\ldots\|$ denote distance from the integers. Let $\a$, $\b$, $\g$ be real numbers with $\a$ irrational. We show that the inequality
 \[
\|\a p^2 + \b p+ \g\| < p^{-37/210}
 \]
has infinitely many solutions in primes $p$, sharpening a result due to Harman (1996) in the case $\b = 0$ and Baker (2017) in the general case.
 \end{abstract}
 
\keywords{Fractional parts of polynomials, exponential sums over primes, generalized Vaughan identity, Harman sieve, linear sieve.}

\subjclass[2010]{Primary 10J54, Secondary 11L20, 11N36}

\maketitle

\section{Introduction}\label{sec1}

Let $f_k(x) = \a x^k + \cdots + \b x + \g$ be a polynomial of degree $k > 1$ with irrational leading coefficient. Inequalities of the form
 \begin{equation}\label{eq1.1}
\|f_k(p)\| < p^{-\rho_k}
 \end{equation}
for infinitely many primes $p$ were studied by Vinogradov \cite{vin}; see \cite{bak} for the strongest available results. The present paper gives a new result for $k=2$.

 \begin{thm}\label{thm1}
Let $\rho_2 = 37/210 = 0.1761\ldots$; then \eqref{eq1.1} holds for infinitely many primes $p$.
 \end{thm}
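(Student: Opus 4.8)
The plan is to combine the classical Fourier (circle-method) transference with a Harman-type sieve, using Heath--Brown's identity (the ``generalized Vaughan identity'') to reduce everything to Type I and Type II exponential sums with a quadratic phase, and then to optimise the sieve decomposition against the ranges in which those sum estimates are available.

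\textbf{Step 1 (Reduction to exponential sums).} Fix a large $N$ and put $\delta = N^{-\rho_2}$, with $H$ slightly larger than $\delta^{-1}$. Using a Vaaler/Selberg minorant $\psi_-$ for the indicator of $\{x : \|x\| < \delta\}$, with $\widehat{\psi_-}(0) = 2\delta + O(1/H)$ and $|\widehat{\psi_-}(h)| \le \min(2\delta,\pi^{-1}|h|^{-1}) + O(1/H)$ supported on $|h| \le H$, it suffices to produce a positive lower bound for
\[
\sum_{\substack{n \le N \\ n\ \mathrm{prime}}} \psi_-(\a n^2 + \b n + \g).
\]
Expanding $\psi_-$ in its Fourier series turns each ingredient the sieve will need (sums of $\psi_-(\a n^2+\b n+\g)$ over $n\le N$ with $q\mid n$, and bilinear analogues) into a main term of size $\asymp \delta\cdot(\text{expected count})$ plus an error governed by sums of the shape
\[
\sum_{1 \le h \le H} \frac{1}{h}\Bigl|\sum_{m} c_m\, e\bigl(h\a q^2 m^2 + h\b q m\bigr)\Bigr|,
\]
so the problem becomes one of showing these weighted error sums are $o(\delta N/\log N)$ once the sieve weights are attached.

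\textbf{Step 2 (Diophantine normalisation and the basic estimates).} Since $\a$ is irrational it has infinitely many convergents $a/q$, $(a,q)=1$, with $\|q\a\| < q^{-1}$ and $q\to\infty$; I would restrict $N$ to a sequence along which $q$ lies in a prescribed power-of-$N$ range, so that for every relevant $h \le H$ the quantities $h\a$ and $h\a m^2$ (for $m$ in the sieve ranges) have controlled rational approximations, letting us separate ``minor-arc'' from ``major-arc'' $(h,m)$. With this in hand one proves a \emph{Type I} estimate, valid for $M \le N^{\theta_1}$,
\[
\sum_{1 \le h \le H}\frac1h \sum_{m \sim M} \Bigl|\sum_{n \le N/m} e\bigl(h\a m^2 n^2 + h\b m n\bigr)\Bigr| \ll \delta N (\log N)^{-A},
\]
obtained by estimating the inner quadratic Weyl sum by van der Corput's method (equivalently a good exponent pair) and summing over $m$ and $h$; and a \emph{Type II} estimate, valid for $N^{\theta_2} \le M \le N^{1-\theta_2}$ and arbitrary bounded $a_m,b_n$,
\[
\sum_{1 \le h \le H}\frac1h\Bigl|\sum_{m \sim M}\sum_{n \sim N/M} a_m b_n\, e\bigl(h\a (mn)^2 + h\b mn\bigr)\Bigr| \ll \delta N (\log N)^{-A},
\]
obtained by Cauchy--Schwarz in the longer variable followed by Weyl differencing: the diagonal $n_1=n_2$ contributes $\asymp N$ per $h$, and the off-diagonal reduces to $\sum_m e\bigl(h\a(n_1^2-n_2^2)m^2 + \cdots\bigr)$, estimated again by van der Corput.

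\textbf{Step 3 (Sieve decomposition and optimisation).} Feed the arithmetic information from Step~2 into a Harman/Buchstab decomposition of the prime-detecting sum. Heath--Brown's identity writes $\Lambda(n)$ on $n\sim N$ as a bounded combination of Type I and Type II convolutions with explicit ranges; pieces whose parameters fall inside the admissible windows $\theta_1,\theta_2$ are evaluated asymptotically, their main terms summing by the prime number theorem to a positive multiple of $\delta\,\pi(N)$, while a sequence of Buchstab iterations discards, with the correct sign, the convolution sums whose parameters lie outside those windows --- the linear (Rosser--Iwaniec) sieve being used to bound the discarded mass and to supply the trivial lower bound in the region already understood. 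Optimising the decomposition and the split points, the constraints from $\theta_1$ and $\theta_2$ balance at exactly $\rho_2 = 37/210$, which yields the theorem.

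\textbf{Main obstacle.} The crux is the Type II estimate: one wants the range $[\,N^{\theta_2},N^{1-\theta_2}\,]$ as long as possible (small $\theta_2$) so that the Buchstab iteration closes, yet the loss in the quadratic Weyl sum --- after Cauchy--Schwarz and after the $h$-average with weights $1/h$ --- must still beat $\delta N$. Every further saving squeezed out of the quadratic exponential sum (a sharper exponent pair, or an extra differencing before Cauchy--Schwarz) widens the admissible range and enlarges $\rho_2$; the improvement over Baker (2017) should come from precisely this tightening of the exponential-sum input together with a more economical Buchstab decomposition that wastes less of the available Type I / Type II information.
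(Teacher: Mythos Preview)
Your outline correctly identifies the overall framework --- Fourier reduction, Type I/Type II sums, Harman sieve plus Heath--Brown identity, and a linear-sieve input --- and this is indeed the skeleton of the paper. But the proposal is missing precisely the new ideas that push $\rho_2$ beyond the $2/13$ of \cite{bak}, so as it stands it would only reproduce the earlier result.

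Concretely: your Type I estimate (``van der Corput / an exponent pair on the inner Weyl sum, then sum over $m$ and $h$'') is what the previous papers already do. The paper's new Type I bound (Theorem~\ref{thm2}) requires the outer coefficients $a_r$ to be a \emph{convolution}, and the gain comes from the Birch--Davenport lemma (Lemma~\ref{lem2}): when too many $vw$ lie in the ``bad'' set, all the approximating rationals $u/(sv^2)$ share a common value, which is then exploited arithmetically. Nothing in your Step~2 supplies this mechanism. Likewise, your Type II treatment (``Cauchy--Schwarz then Weyl differencing, off-diagonal becomes $\sum_m e(h\a(n_1^2-n_2^2)m^2+\cdots)$'') is again the classical route; the paper's improvement (Theorem~\ref{thm3}) rests on the counting lemma (Lemma~\ref{lem4}) bounding $\sum_n R(n)^2$ for the representations $\ell_1y_1^2-\ell_2y_2^2=n$ with $\ell_i\le L$, which is what extends the Type~II range enough to reach $37/210$. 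Finally, there is a specific range $(2N)^{1/2-9\s}\le p<(2N)^{3/8+33\s/4}$ that the Harman sieve alone cannot handle; the paper deals with it (Section~\ref{sec5}) via Iwaniec's bilinear form of the linear sieve, an ingredient your Step~3 alludes to only generically.

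In short, the architecture is right, but the three specific inputs --- Birch--Davenport for Type~I, the second-moment bound on $R(n)$ for Type~II, and the targeted Iwaniec sieve for the residual range --- are the content of the improvement, and your proposal does not yet contain them.
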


The values $\rho_2 = 1/8 - \e$, $3/20$, $2/13 = 0.1538\ldots$ were given by Ghosh \cite{ghosh}, Baker and Harman \cite{bakhar}, and Harman \cite{har} in the case $\b=0$; Baker \cite{bak} extended the last result to general $\b$.

As in \cite{bak}, \cite{bakhar}, \cite{har} we use the Harman sieve. We make progress in the present paper by giving new bounds for sums of the shape
 \[
\sum_{\ell \le L} c_\ell \sum_{R < r \le 2R} a_r \sum_{\frac N2 < sr \le N} b_s e(\ell g(rs))
 \]
where $g$ is the approximating polynomial to $f_2$ in \cite{bak}. Type I sums (in which $b_s \equiv 1$) are treated in Section 2, and general (Type II) sums in Section 3. In the Type I case, $(a_r)_{R < r \le 2R}$ is restricted to convolutions of shorter sequences; a lemma of Birch and Davenport \cite{birdav} on Diophantine approximation plays a key role. For Type II sums, a subsidiary task is the study of the average behavior, as $n$ varies, of the number of solutions of
 \[
\ell_1 y_1^2 - \ell_2 y_2^2 = n \quad (\ell_1, \ell_2 \le L; \ y_1, y_2 \in [Y, 2Y)).
 \]

One sum that eludes the Harman sieve takes the form
 \[
\sum_{\substack{(2N)^{\frac 12 - 9\s} \le p < (2N)^{\frac 38 + \frac{33\s}4}\\
pp' \in \left(\frac N2, N\right], \|g(pp')\|\le\d}} 1
 \]
(where $\rho = \frac{37}{210} + \e$, $\s = \rho - \frac 16$ and $\d = \frac 12\, N^{-\rho+\frac \e2}$). We bound it above using the form of the linear sieve given by Iwaniec \cite{iwan}. This sum is treated in Section 5; the sums accessible via the Harman sieve are in Section 4. Section 6 contains the sieve decomposition of $S(\mc A, (2N)^{1/2})$ (defined below), and the calculations leading to \cref{thm1}. Integrals that appear here and in earlier drafts were calculated by Andreas Weingartner; thanks, Andreas, for your generosity.

The following notations will be used:\\[.5mm]
 
\noindent $\|\t\| = \min\limits_{n \in \mb Z} |\t - n|$.\\

\noindent $|\mc E| = \sum\limits_{n \in\mc E} 1 \qquad (\mc E \subset [1, N])$.\\

\noindent $\chi_{\mc E} =$ indicator function of $\mc E$.\\

\noindent $C \ldots$ absolute constant, not the same at each occurrence.\\

\noindent $\l \ldots$ real number with $|\l| \le C$, not the same at each occurrence.\\

\noindent $\e\ldots$ sufficiently small positive number; $\eta = \e^9$.\\

\noindent $\gg, \ll\ldots$ indicate implied constants that may depend on $\e$.\\

\noindent $y \sim Y \ldots$ indicates $Y < y \le 2Y$.\\

\noindent $e(\t) = e^{2\pi i\t}$.\\

\noindent $\frac aq \ldots$ fraction in lowest terms with $|\a_2 - \frac aq|< \frac 1{q^2}$, with $q$ sufficiently large; $g(x) = \frac aq\, x^2 + \b x + \g$.
 \bigskip

We choose $N$ so that
 \[
L_1^{1/2} N \ll q \ll L_1^{1/2} N \ \text{ with } \ L_1 = 2N^{\rho-\frac \e2}
 \]
and write
 \[
L = N^{\rho-\e/3}.
 \]
Here $\rho$ will ultimately be $\frac{37}{210} + \e$; earlier in the paper we restrict $\rho$ somewhat less. We do suppose $\rho > \frac 16$, and write
 \[
\s = \rho - 1/6.
 \]

We reserve the symbols $p, p_1, p', \ldots$ for prime numbers.

Let $\d = L_1^{-1}$ and
 \[
\mc A = \left\{n : n \sim \frac N2, \|g(n)\| < \d\right\}, \ \mc B = \left\{n : n \sim \frac N2\right\}.
 \]
We write $I(m)$ for an arbitrary subinterval of $\left(\frac N{2m}, \frac Nm\right]$.
 \bigskip

\section{Type I sums}\label{sec2}

The object of this section is to prove

 \begin{thm}\label{thm2}
Let $\frac 16<\rho<\frac 2{11}$. Let $V\ge 1, W\ge 1$, 
 \[
V^3W^2\ll N^{2-3\rho}, VW^3\ll N^{3-15\rho/2}, VW\ll N^{2-8\rho}.
 \]
Let
 \[
T: = \sum_{\ell = 1}^L \sum_{v\sim V} \sum_{w\sim W}\left|\sum_{n\epsilon I(vw)} e(\ell g(vwn))\right|.
 \]
Then
 \[
T \ll N^{1-10\eta}.
 \]
 \end{thm}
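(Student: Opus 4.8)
\emph{Proof proposal.} The plan is to open a square by Cauchy--Schwarz and then exploit that $g$ is quadratic, which makes the sum over $\ell$ elementary. By Cauchy--Schwarz over the $\ll LVW$ triples $(\ell,v,w)$,
\[
T^2\ll LVW\sum_{\ell\le L}\sum_{v\sim V}\sum_{w\sim W}\left|\sum_{n\in I(vw)}e(\ell g(vwn))\right|^2 ,
\]
and on expanding the square over $n_1,n_2\in I(vw)$ one has
\[
g(vwn_1)-g(vwn_2)=(n_1-n_2)(vw)\Big[\tfrac aq(vw)(n_1+n_2)+\b\Big]=:\Delta ,
\]
so that the sum over $\ell$ contributes $\ll\min(L,\|\Delta\|^{-1})$. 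Separating the diagonal $n_1=n_2$, which contributes $\ll LN$, leaves
\[
T^2\ll LVW\big(LN+\mc E\big),\qquad
\mc E:=\sum_{v\sim V}\sum_{w\sim W}\ \sum_{\substack{n_1,n_2\in I(vw)\\ n_1\ne n_2}}\min\!\big(L,\|\Delta\|^{-1}\big).
\]
First I would dispose of the diagonal: its contribution $L^2VWN$ is $\ll N^{2-20\eta}$ by $VW\ll N^{2-8\rho}$ and $\rho>\tfrac16$, with room to spare, so the hypotheses must be doing their real work on $\mc E$.

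In $\mc E$ put $k=n_1-n_2$ (so $0<|k|<M$, $M\asymp N/(vw)$) and $s=n_1+n_2$ (so $s\asymp M$), giving
\[
\mc E\ll\sum_{v\sim V}\sum_{w\sim W}\ \sum_{0<|k|<M}\ \sum_{s\asymp M}\min\!\Big(L,\big\|\tfrac aq(vw)^2ks+\b(vw)k\big\|^{-1}\Big).
\]
Since $q\asymp L_1^{1/2}N$ is large, $\tfrac aq$ times an integer is well distributed, and the task is to show the \emph{product} $(vw)^2ks$ seldom lands near a multiple of $q$. This is where the restriction of $(a_r)$ to a convolution pays off: keeping $v$ and $w$ separate, the argument is a quadratic polynomial in $v$ for fixed $w,k,s$, namely $\tfrac aq(w^2ks)\,v^2+\b(wk)\,v$, and likewise in $w$, so the lemma of Birch and Davenport \cite{birdav} applies to the sum over $v$ (resp. over $w$), the rational approximation being furnished by $a/q$ together with the relevant $\gcd$ with $q$. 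The leftover sums over the remaining variables are then handled by the standard bound for $\sum\min(L,\|\cdot\|^{-1})$ together with elementary divisor estimates such as $\sum_{v\sim V}\gcd(v^2,q)^{1/2}\ll VN^{\eta}$ and $\sum_{w\sim W}\gcd(w^2,q)^{1/2}\ll WN^{\eta}$, which keep the small-modulus terms under control; a handful of degenerate ranges must be treated on their own. Collecting the outcome produces three error terms, and requiring each to be $\ll N^{2-20\eta}/(LVW)$ yields exactly the three hypotheses $V^3W^2\ll N^{2-3\rho}$, $VW^3\ll N^{3-15\rho/2}$, $VW\ll N^{2-8\rho}$; the restriction $\rho<\tfrac2{11}$ is what makes all three simultaneously effective over a nontrivial range of $(V,W)$.

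The hard part will be the estimate for $\mc E$, equivalently the off-diagonal of the second moment. Treating the innermost sum in isolation, whether trivially or via the standard bound for $\sum\min(L,\|\cdot\|^{-1})$, is hopeless: for generic $(v,w,k)$ the fraction $\tfrac aq(vw)^2k$ has modulus $\asymp q$, far larger than the length $M$ of the $s$-sum, and the ensuing bounds overshoot $N^{2-20\eta}$ as soon as $VW$ is large. Genuine cancellation must be extracted from the full sum over $(v,w,k,s)$, using both the good rational approximation to $a/q$ and the thin multiplicative shape of $(vw)^2ks$; this is precisely what the Birch--Davenport lemma supplies, and it is what forces one to keep $v$ and $w$ unamalgamated --- collapsing $v,w\mapsto r=vw$ by the divisor bound would inflate the count of admissible $r$ and cost a power of $N$ the hypotheses cannot afford (roughly, it would demand $VW\ll N^{1/2-\rho}$, far stronger than what holds for $\rho<\tfrac2{11}$). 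Carrying out the $\gcd$ bookkeeping so that nothing worse than $N^{\eta}$ is lost, and dispatching the degenerate ranges, is the delicate business, and it is what pins down the precise shape of the three conditions.
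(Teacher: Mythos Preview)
Your plan diverges from the paper's proof at the very first step, and the divergence is fatal. The paper does \emph{not} open a square by Cauchy--Schwarz. Instead, it applies a Weyl-type lemma (Lemma~1, extracted from \cite[Lemma 8]{bak}) directly to the inner sum $\sum_{n\in I(y)}e(\ell g(yn))$: for each $y=vw$ that contributes significantly, there exist $s\ll L^2N^\eta$ and $u$ with $(u,q)=1$ such that $|say^2/q-u|<Z^{-1}$, with $Z$ in a controlled range. This eliminates the linear coefficient $\beta$ and produces a clean Diophantine condition on $y^2$ alone. The bad set of pairs $(v,w)$ is then attacked by fixing $w$ and applying the Birch--Davenport lemma with $\theta=w^2a/q$ and integer variable $x=sv^2$: the lemma says that if many such $x$ satisfy $|\theta x-u|<Z^{-1}$, all the ratios $u/x$ collapse to a single $t(w)/r(w)$. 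One then bounds, via Lemma~3, the number of $w$ sharing a given $r$, and a divisor argument controlling the possible $r$'s completes the proof. The three hypotheses on $V,W$ enter at three separate, identifiable places in this chain.

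Your Cauchy--Schwarz step, by contrast, introduces the two extra variables $k=n_1-n_2$, $s=n_1+n_2$ and leaves you with $\|\tfrac aq(vw)^2ks+\beta(vw)k\|$. You then write ``the argument is a quadratic polynomial in $v$\ldots so the lemma of Birch and Davenport applies''; but the Birch--Davenport lemma concerns the \emph{linear} inequality $|\theta x-z|<\zeta$, and it is a \emph{structure} lemma (all $z/x$ are equal), not a bound for $\sum\min(L,\|\cdot\|^{-1})$. You have not said what plays the role of $\theta$, $x$, $\zeta$, nor how the conclusion that many $z/x$ coincide is converted into a saving in your four-fold sum. The presence of the surviving linear term $\beta(vw)k$ is a further obstruction that the paper's route sidesteps entirely. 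Finally, the assertion that ``collecting the outcome produces three error terms'' matching the three hypotheses is simply asserted, not derived; as it stands there is no mechanism in your argument that singles out $V^3W^2$, $VW^3$, $VW$ with those particular exponents. The proposal is a heuristic outline, and its central step --- the treatment of $\mc E$ --- is a genuine gap.
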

We require several lemmas.

 \begin{lem}\label{lem1}
Let $1\le Y\ll N^{1-2\rho}$. Let $\mc S$ be the set of $y\in (Y,4Y]$ with $\langle y,q\rangle \le N^\rho$ and
 \begin{equation} \label{eq 2.1}
T(y): = \sum_{\ell =1}^L \left| \sum_{n\in I(y)} e(\ell g(yn))\right| >N^{1-10\eta} Y^{-1}.
 \end{equation}
There are a set $\mc S^* \subset \mc S$ and positive numbers $S$, $Z$ with the following properties.

 \begin{enumerate}
\item[(i)] We have
 \begin{equation}\label{eq2.2}
\sum_{y\in \mc S} T(y)\ll L^{1/2}N^{10\eta}Z^{1/2}\left| \mc S^*\right|;
 \end{equation}
\item[(ii)]
for $y\in \mc S^*$, we have
 \begin{equation}\label{eq2.3}
\left|\frac{sa\, y^2}q-u\right| <Z^{-1}
 \end{equation}
for some $s=s(y)$ in $\mb N, u=u(y)$ in $\mb Z, (u,q)=1$, and
 \begin{equation}\label{eq2.4}
s\sim S\ll L^2 N^\eta;
 \end{equation}
\item[(iii)] $Z$ satisfies
 \begin{equation}\label{eq2.5}
(N/Y)^2 L^{-1}N^{-\eta}\ll Z \ll LS^{-1}(N/Y)^2.
 \end{equation}
 \end{enumerate}
 \end{lem}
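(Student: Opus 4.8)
The plan is to apply Cauchy--Schwarz over $\ell$, reduce the resulting mean square to a purely arithmetic sum in which the linear term $\b y$ of $g(yn)$ has vanished, and then read off the Diophantine information from the continued fraction of $\frac{ay^2}{q}$. \emph{Reduction to an arithmetic sum.} Fix $y$ and write $V=N/Y$. By Cauchy--Schwarz,
\[
T(y)^2\le L\sum_{\ell\le L}\Bigl|\sum_{n\in I(y)}e(\ell g(yn))\Bigr|^2
=L\sum_{\ell\le L}\sum_{n_1,n_2\in I(y)}e\bigl(\ell(g(yn_1)-g(yn_2))\bigr).
\]
Since $g(yn_1)-g(yn_2)=(n_1-n_2)\bigl(\tfrac{ay^2}{q}(n_1+n_2)+\b y\bigr)$, one sets $h=n_1-n_2$, $m=n_1+n_2$ and carries out the geometric sum over $m$ first; this removes the factor $e(\ell h\b y)$ (of modulus $1$), and after collecting $j=\ell h$ with its divisor-bounded multiplicity $\ll N^\eta$,
\[
\sum_{\ell\le L}\Bigl|\sum_{n\in I(y)}e(\ell g(yn))\Bigr|^2\ll N^\eta\bigl(LV+W(y)\bigr),
\qquad
W(y):=\sum_{1\le j\le LV}\min\Bigl(V,\ \bigl\|\tfrac{jay^2}{q}\bigr\|^{-1}\Bigr).
\]
Because $Y\ll N^{1-2\rho}$ and $L=N^{\rho-\e/3}$ one has $N^{22\eta}L^2\ll V$, whence $N^\eta L^2V\ll N^{-21\eta}V^2$; as $T(y)^2>N^{-20\eta}V^2$ for $y\in\mc S$, this forces $LV\ll W(y)$,
\[
W(y)\gg N^{-21\eta}V^2L^{-1},\qquad\text{and}\qquad T(y)\ll N^\eta L^{1/2}W(y)^{1/2}.
\]

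\emph{The continued fraction mechanism.} Write $\frac{ay^2}{q}=\frac{b}{\tilde q}$ in lowest terms; since $(a,q)=1$ we have $\tilde q=q/\gcd(q,y^2)$, and the hypothesis $\langle y,q\rangle\le N^\rho$ bounds $\gcd(q,y^2)\ll N^{O(\rho)}$, so $\tilde q$ stays close to $q$. Grouping the $j\le LV$ in $W(y)$ by the largest convergent denominator of $\frac b{\tilde q}$ not exceeding the pertinent scale, and using the classical bound for $\sum_{j\le J}\min(V,\|j\theta\|^{-1})$ in each group, one obtains
\[
W(y)\ll N^\eta\Bigl(LV+\sum_{(s',s'')}\min\bigl(s'',\,LV^2/s'\bigr)\Bigr),
\]
the sum over consecutive pairs $s'<s''$ of denominators in the continued fraction of $\frac{ay^2}{q}$. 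Combined with the previous display, for $y\in\mc S$ the pair maximising the summand satisfies $Z:=\min(s'',LV^2/s')\gg N^{-O(\eta)}V^2L^{-1}$; hence $s'\ll L^2N^{O(\eta)}$ and $Z\le s''$. Taking $s=s'$, $S=s'$ and $u=$ the corresponding convergent numerator,
\[
\Bigl|\tfrac{say^2}{q}-u\Bigr|=\bigl\|\tfrac{s'ay^2}{q}\bigr\|\asymp\tfrac1{s''}\le Z^{-1},
\]
which is \eqref{eq2.3}; the coprimality $(u,q)=1$ is secured by deleting from $\mc S$ a subset of negligible $T$-weight, using once more that $\gcd(q,y^2)$ is small.

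\emph{Pigeonholing and assembly.} A dyadic pigeonholing over $y\in\mc S$, weighted by $T(y)$, fixes the dyadic sizes of $s(y)$ and of $Z$ on a subset $\mc S^*\subset\mc S$ with $\sum_{y\in\mc S^*}T(y)\gg(\log N)^{-C}\sum_{y\in\mc S}T(y)$. Then \eqref{eq2.4} holds as $s\sim S\ll L^2N^\eta$; \eqref{eq2.5} holds as $Z\gg N^{-O(\eta)}V^2L^{-1}$ (lower bound) and $Z\le LV^2/s'=LV^2S^{-1}$ (upper bound); and \eqref{eq2.2} follows by summing $T(y)\ll N^\eta L^{1/2}W(y)^{1/2}\ll N^{O(\eta)}L^{1/2}Z^{1/2}$, valid because $W(y)\ll N^{O(\eta)}Z$ for the maximising pair, over $y\in\mc S^*$.

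The hard part is the passage in the second and third paragraphs from ``$W(y)$ is large'' to a \emph{single} relation $\|\frac{say^2}{q}\|<Z^{-1}$ with $s$ confined to the short range $s\sim S\ll L^2N^\eta$ and $(u,q)=1$, while keeping every pigeonholing loss compatible with \eqref{eq2.4}--\eqref{eq2.5}. The interaction of $q$ with powers of $y$ --- controlled by $\langle y,q\rangle\le N^\rho$, which keeps $\tilde q$ close to $q$ --- is the delicate point, and it is this, together with the need to keep $LV$ below $\tilde q$ in the extreme case $Y\asymp N^{1-2\rho}$, that makes the hypotheses $1\le Y\ll N^{1-2\rho}$ and $\tfrac16<\rho<\tfrac2{11}$ enter.
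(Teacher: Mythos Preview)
The paper does not give a self-contained argument here; it simply records that the lemma ``can readily be extracted from the proof of \cite[Lemma~8]{bak}'' with a trivial change of parameter. Your Cauchy--Schwarz reduction to $W(y)=\sum_{j\ll LV}\min(V,\|jay^2/q\|^{-1})$ is correct, and the overall shape (continued-fraction analysis of $W(y)$, then dyadic pigeonholing in $s$ and $Z$) is a plausible route and may well match what is in \cite{bak}. But two substantive steps are left unjustified.

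First, your displayed bound $W(y)\ll N^\eta\bigl(LV+\sum_{(s',s'')}\min(s'',LV^2/s')\bigr)$ is not a standard estimate in that form, and you do not derive it. The classical bound with the largest convergent denominator $s'\le LV$ gives $W(y)\ll N^\eta LV^2/s'$ (and hence $s'\ll L^2N^{O(\eta)}$, since $V\gg L^2$); but your argument then needs $W(y)\ll N^{O(\eta)}Z$ with $Z=\min(s'',LV^2/s')$, and when the next denominator $s''$ is only slightly larger than $LV$ one has $Z=s''\ll LV\ll V^2/L\ll LV^2/s'$, so the inequality $W(y)\ll N^{O(\eta)}Z$ does not follow from what you have written. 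Closing this gap requires either a sharper decomposition of $W(y)$ or a different choice of $s$ and $Z$, and that is precisely the content one must extract from \cite{bak}.

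Second, the coprimality condition in \eqref{eq2.3} is $(u,q)=1$, not $(u,s)=1$; this does not come for free from the continued fraction of $ay^2/q$, and your assertion that it ``is secured by deleting from $\mc S$ a subset of negligible $T$-weight'' is not substantiated. Handling it requires the meaning of the symbol $\langle y,q\rangle$ (defined in \cite{bak}, not in this paper) and a specific argument about common factors of $s$, $y^2$, and $q$.
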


\begin{proof}
This can readily be extracted from the proof of \cite [Lemma 8]{bak}, with $5\eta$ in place of $\eta$.
\end{proof}

\begin{lem}\label{lem2}
Let $\t$ be a real number and suppose there exist $R$ distinct integer pairs $x,z$ satisfying
 \begin{equation}\label{2.6}
\left|\t x-z\right|<\z ,\ 0 <|x|<X,
 \end{equation}
where $R\ge 24\z X>0$. Then all integer pairs $x$, $z$ satisfying \eqref{2.6} have the same ratio $z/x$.
\end{lem}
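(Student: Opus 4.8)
The plan is to show that so many simultaneous good approximations force $\theta$ to be extremely close to a single rational of small denominator, and then to read off that every pair lies on the corresponding line through the origin.

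First I would make some harmless reductions. The set of pairs satisfying \eqref{2.6} is symmetric under $(x,z)\mapsto(-x,-z)$, and dividing a pair by $\gcd(x,z)$ yields another such pair with the same ratio $z/x$; so it suffices to control positive primitive pairs $(q,a)$ with $\gcd(q,a)=1$, and we may as well assume the $R$ pairs of the hypothesis are \emph{all} the pairs satisfying \eqref{2.6}. Now a crude count: each admissible $x$ admits at most $1+2\zeta$ integers $z$ (integers in an open interval of length $2\zeta$), and there are fewer than $2X$ admissible $x$, so $R<2X(1+2\zeta)=2X+4\zeta X$; comparing with $R\ge 24\zeta X$ forces $\zeta<\tfrac1{10}$. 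In particular $2\zeta<1$, so each $x$ has a unique associated $z$, the pairs occur in $\pm$-twins, and, writing $M=R/2$, there are $M\ge 12\zeta X$ integers $x$ with $0<x<X$ and $\|\theta x\|<\zeta$.

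The heart of the matter is to produce a reduced fraction $a_*/q_*$ for which $(q_*,a_*)$ satisfies \eqref{2.6} and
\[
q_*\,\zeta\le\tfrac13\qquad\text{and}\qquad X\,\|\theta q_*\|\le\tfrac13 .
\]
Granting this, the lemma is immediate: for any pair $(x,z)$,
\[
|a_*x-q_*z|=\bigl|q_*(\theta x-z)-x(\theta q_*-a_*)\bigr|\le q_*\,|\theta x-z|+|x|\,\|\theta q_*\|<q_*\zeta+X\,\|\theta q_*\|\le\tfrac23<1 ,
\]
so $a_*x=q_*z$; since $\gcd(a_*,q_*)=1$ this forces $q_*\mid x$ and $z/x=a_*/q_*$ for every pair, which is the assertion.

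To obtain the fraction $a_*/q_*$ I would organise the $M$ solutions by their ratio. A pair of ratio $a_i/q_i$ (primitive) is $(tq_i,ta_i)$ with $0<|t|<X/q_i$ and $|t|\,\|\theta q_i\|<\zeta$, so the $i$-th family has at most $2\min\bigl(X/q_i,\ \zeta/\|\theta q_i\|\bigr)$ members; moreover two distinct ratios must satisfy $\|\theta q_i\|\,q_j+\|\theta q_j\|\,q_i\ge 1$, because $|a_i/q_i-a_j/q_j|\ge 1/(q_iq_j)$ while $a_i/q_i$ lies within $\|\theta q_i\|/q_i$ of $\theta$. From the first fact, a family with at least $6\zeta X$ members has $\min(X/q_i,\zeta/\|\theta q_i\|)\ge 3\zeta X$, which is precisely $q_i\zeta\le\tfrac13$ and $X\|\theta q_i\|\le\tfrac13$; and the second fact shows that no \emph{second} family can be anywhere near as large (with $q_*\zeta,\,X\|\theta q_*\|\le\tfrac13$ and $q_j<X$, $\|\theta q_j\|<\zeta$ one gets $\|\theta q_*\|q_j+\|\theta q_j\|q_*<\tfrac23<1$, a contradiction). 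So everything comes down to showing that $M\ge 12\zeta X$ forces one family of size $\ge 6\zeta X$. When $\zeta X$ is bounded this is trivial — if $2\zeta X<1$ then any two pairs already satisfy $|x_1z_2-x_2z_1|\le(|x_1|+|x_2|)\zeta<1$ and hence share a ratio — so the real content is to exclude, for large $\zeta X$, the possibility that the solutions are spread over many thin families. This is exactly where the constant $24$ has to be spent, and pinning it down cleanly — for instance via the three-distance theorem applied to the orbit of $\theta$ and the arc $(-\zeta,\zeta)$ on $\mathbb R/\mathbb Z$, which forces the gaps between consecutive solutions to take at most three values and so controls their distribution — is the step I expect to be the main obstacle.
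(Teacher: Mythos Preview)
The paper does not prove this lemma at all; it simply cites Birch and Davenport \cite{birdav}. So there is no in-paper argument to compare against.

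Your proposal sets up a reasonable framework. The reduction to $\zeta<\tfrac1{10}$ is correct, as is the observation that once some primitive pair $(q_*,a_*)$ satisfies $q_*\zeta\le\tfrac13$ and $X\,\|\theta q_*\|\le\tfrac13$, the determinant estimate $|a_*x-q_*z|<1$ forces every admissible $(x,z)$ onto the line $z/x=a_*/q_*$; your uniqueness argument for the large family is also fine (indeed it shows more: once one such $(q_*,a_*)$ exists, \emph{no} second ratio can occur at all). But the crucial step --- that $M\ge 12\zeta X$ forces some family to contain at least $6\zeta X$ members --- is, by your own admission, not carried out. The three-distance theorem you gesture at concerns the gap structure of the full orbit $\{\theta\},\{2\theta\},\ldots$ on the circle; it does not by itself bound the number of distinct ratios among the hits in the arc $(-\zeta,\zeta)$, nor single out a dominant family, and extracting that conclusion from it essentially amounts to rebuilding the continued-fraction theory of best approximations. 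Until that step is written out in full and the constant $24$ is actually accounted for, what you have is a plan rather than a proof; for a complete argument you should consult \cite{birdav} directly.
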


 \begin{proof}
Birch and Davenport\cite{birdav}.
 \end{proof}

 \begin{lem}\label{lem3}
Suppose that $s\ge 1$, $D\ge 1$, $sD^2<q$, $Z\ge 2$. The number of solutions $y\in(Y,4Y]$, with $\langle y,q\rangle \le D$, of the inequality
 \[
\left\| \frac{sa\,y^2}{q}\right\| <\frac 1Z
 \]
is
 \[
\ll N^\eta \left(\frac{Y+q^{1/2}}{Z^{1/2}}\right).
 \]
 \end{lem}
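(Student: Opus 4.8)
The plan is to estimate $\mathcal N:=\#\{y\in(Y,4Y]:\langle y,q\rangle\le D,\ \|sa y^2/q\|<1/Z\}$ after first reducing the modulus. Putting $g=(q,s)$ and $q_1=q/g$, one has $\frac{sa}{q}=\frac{c}{q_1}$ with $(c,q_1)=1$, so $\|sa y^2/q\|<1/Z$ is a condition on $y$ modulo $q_1$ alone. Two preliminary remarks carry most of the weight. First, $sD^2<q\le sq_1$ forces $D^2<q_1$. Second, if $q_1\mid y^2$ then $(y,q_1)\ge q_1^{1/2}>D$, so, since $\langle y,q\rangle\ge(y,q_1)$, the condition $q_1\mid y^2$ is incompatible with $\langle y,q\rangle\le D$. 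I would then distinguish two ranges of $Z$.

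If $Z\ge q_1$, then $\mathcal N=0$: for $q_1\nmid cy^2$ one has $\|cy^2/q_1\|\ge1/q_1\ge1/Z$, while $q_1\mid cy^2\iff q_1\mid y^2$ has just been excluded. Suppose now $Z<q_1$. Then $\|cy^2/q_1\|<1/Z$ forces $cy^2\bmod q_1$ into a set $S$ of at most $2q_1/Z+1$ residues. I would delete from $S$ every residue $v$ with $(v,q_1)>D^2$: any $y$ with $cy^2\equiv v\pmod{q_1}$ satisfies $(y,q_1)^2\ge(y^2,q_1)=(v,q_1)$, so such $v$ contribute nothing under $\langle y,q\rangle\le D$. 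For the surviving residues $v$ the congruence $cy^2\equiv v\pmod{q_1}$ has, by the classical count for quadratic congruences, $\ll N^\eta(v,q_1)^{1/2}\ll N^\eta D$ solution classes, each meeting $(Y,4Y]$ in $\le 3Y/q_1+1$ integers, which already gives $\mathcal N\ll N^\eta D(q_1/Z+1)(Y/q_1+1)$.

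To do better when that crude bound is not yet $\ll N^\eta(Y+q^{1/2})Z^{-1/2}$, I would bring in the classical squaring–differencing inequality $\mathcal N^2\le\mathcal N+W$, where
\[
W=\#\{(y_1,y_2)\in(Y,4Y]^2:\ y_1\ne y_2,\ \|c(y_1^2-y_2^2)/q_1\|<2/Z\},
\]
so that $\mathcal N\ll 1+W^{1/2}$. Writing $h=y_1-y_2$, $k=y_1+y_2$ and treating $chk\equiv r\pmod{q_1}$ by the hyperbola method (for fixed $h$ the variable $k$ lies in $O((h,q_1))$ residue classes mod $q_1$, each meeting its interval in $O(Y/q_1+1)$ points), together with $\sum_{0<|h|<3Y}(h,q_1)\ll N^\eta(Y+q_1)$ and the divisor bound $d(m)\ll N^\eta$, one should reach a bound for $W$ whose terms — of the shape $Y^2/Z$, $Y^2/q_1$, $q_1/Z$, $1$, possibly after extra care to prevent the incompleteness of the bilinear sum $\sum_{h,k}e(mchk/q_1)$ from reintroducing a factor $q_1^{1/2}$ or $Y$ — are all $\ll N^\eta(Y+q^{1/2})^2/Z$ once $Z<q_1\le q$ is used.

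The step I expect to be the main obstacle is exactly this last one: controlling the hyperbola/exponential count for $W$ uniformly in the range $Z<q_1$ well enough that no power of $q_1$ or $Y$ survives against the target $N^\eta(Y+q^{1/2})Z^{-1/2}$, and dovetailing it cleanly with the reduction to residues $v$ of small $(v,q_1)$. Everything else — the range $Z\ge q_1$, the solution count for a quadratic congruence, and the interval estimates — is routine once the modulus has been cut down to $q_1$ and the coupling of $s$, $q$ and $D$ through $sD^2<q$ and $\langle y,q\rangle\le D$ has been exploited.
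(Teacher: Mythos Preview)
The paper does not prove this lemma: its entire proof is the citation ``\cite[Lemma 3]{bak}''. So there is no in-paper argument to compare against; what you have written is an attempt to reconstruct such an argument from scratch.

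Your reduction to the modulus $q_1=q/(q,s)$ and the case $Z\ge q_1$ are fine, and the squaring step $\mathcal N^2\le \mathcal N+W$ is the right idea. The difficulty you flag is real, and in fact your stated route---fixing $h=y_1-y_2$ and then counting $k=y_1+y_2$---does produce a stray term of size $Yq_1/Z$: for fixed $h$ the condition $\|chk/q_1\|<2/Z$ places $k$ in $O(q_1/Z+(h,q_1))$ classes modulo $q_1$, and summing $q_1/Z$ over $|h|<3Y$ already gives $Yq_1/Z$, which is not $\ll (Y+q^{1/2})^2/Z$ when $1\ll Y\ll q^{1/2}$. So as written the bound for $W$ does not close.

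The clean fix is to abandon the $h$-then-$k$ count and treat $n=hk$ as a single variable. Since $(c,q_1)=1$, the condition $\|cn/q_1\|<2/Z$ confines $n$ to at most $4q_1/Z+1$ residue classes modulo $q_1$; each class meets $0<|n|\le 24Y^2$ in $O(Y^2/q_1+1)$ integers; and each such $n$ factors as $hk$ with $h,k$ in the required ranges in at most $d(|n|)\ll N^{\eta}$ ways. This gives
\[
W\ \ll\ N^{\eta}\Bigl(\frac{q_1}{Z}+1\Bigr)\Bigl(\frac{Y^2}{q_1}+1\Bigr)\ \ll\ N^{\eta}\Bigl(\frac{Y^2}{Z}+\frac{q_1}{Z}\Bigr)
\]
(using $Z<q_1$), whence $\mathcal N\ll 1+W^{1/2}\ll N^{\eta}(Y+q_1^{1/2})Z^{-1/2}\le N^{\eta}(Y+q^{1/2})Z^{-1/2}$. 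These are exactly the four terms $Y^2/Z$, $Y^2/q_1$, $q_1/Z$, $1$ you list; you simply need to arrive at them by the divisor-function route rather than by summing over $h$ first. With that change your sketch becomes a complete proof.
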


 \begin{proof}
\cite[Lemma 3]{bak}.
 \end{proof}

 \begin{proof}[Proof of \cref{thm2}]
In the notation of \Cref{lem1}, with $Y=VW$, let $\mc C$ be the set of pairs $(v,w)$ for which $v\sim V$, $w\sim W$, $ vw\in \mc S^*$. As in \cite{bak}, proof of Lemma 8, it suffices to prove that
 \begin{equation}\label{eq2.7}
\sum_{(v,w)\in \mc C} T(v,w) \ll N^{1-10\eta}.
 \end{equation}

In view of the Type I result obtained in \cite[Lemma 8]{bak}, we may suppose that
 \[
VW>N^{1-5\rho/2-\eta}.
 \]
We note that
 \[\left| \mc C \right| \ll \frac{N^{2\eta}S(VW+q^{1/2})}{Z^{1/2}}
 \]
as a consequence of \Cref{lem3}; indeed
 \begin{equation}\label{eq2.8}
\left| \mc C \right|\ll \frac{N^{2\eta}SVW}{Z^{1/2}},
 \end{equation}
since $VW>N^{1-5\rho/2-\eta}>N^{1/2+\rho/4}\gg q^{1/2}$ for $\e$ sufficiently small.
\bigskip

Suppose for a moment that
 \begin{equation}\label{eq2.9}
S<N^{1-23\eta}(VW)^{-1}L^{-1/2};
 \end{equation}
then \eqref{eq2.2} gives (with a divisor argument)
 \begin{align*}
\sum_{(v,w)\in \mc C} T(v,w) &\ll L^{1/2} N^{11\eta}Z^{1/2} | \mc C |\\
&\ll N^{1-10\eta}
 \end{align*}
(from \eqref{eq2.8},\eqref{eq2.9}), giving \eqref{eq2.7}. So we may suppose that
 \[
S\ge N^{1-23\eta}(VW)^{-1}L^{-1/2}.
 \]

It now follows from \eqref{eq2.5} that
 \begin{equation}\label{eq2.10}
Z\le \left(\frac{N}{VW}\right)^2 L\, \frac{VWL^{1/2}}{N^{1-23\eta}}=\frac{N^{1+23\eta}}{VW}\, L^{3/2}.
 \end{equation}

Now let
 \[
\mc C(w)=\{v\sim V: (v,w)\in \mc C\}
 \]
and for $K\geq$ 1, let
 \[
\mc E (K)=\{ w\sim W:K\le |\mc C(w)|<2K\}.
 \]
Then
 \begin{align*}
|\mc C |&=\sum_{w\sim W}|\mc C(w)|\\
&\le \sum_{K=2^m\in [1,V]}2K|\mc E(K)|.
 \end{align*}
We choose $K,1\le K\le V$, so that
 \begin{equation}\label{eq2.11}
|\mc C| \ll \log N |\mc E(K)| K.
 \end{equation}

Suppose for a moment that
 \[
L^{1/2} Z^{1/2} KW<N^{1-22\eta}.
 \]
Then arguing as above,
 \begin{align*}
\sum_{(v,w)\in \mc C}T(v,w)&\ll L^{1/2} N^{11\eta}Z^{1/2} \log N|\mc E(K)|K\\
&\ll N^{12\eta} L^{1/2} Z^{1/2} WK \ll N^{1-10\eta}.
 \end{align*}
Thus we may suppose that
 \begin{equation}\label{eq2.12}
K\ge \frac{N^{1-22\eta}}{L^{1/2}Z^{1/2}W}. 
 \end{equation}

For the next stage of the argument, let $w$ be a fixed integer in $\mc E(K)$. We apply \cref{lem2}, taking
 \[
\t =\frac{w^2a}{q},\ X=8SV^2,\ \zeta =\frac 1Z
 \]
since (by \eqref{eq2.3} and the definition of $\mc C$)
 \[
|(sv^2)\t-u|<\frac 1Z\ ,\ s=s(v,w), u=u(v,w)
 \]
for every $v$ in $\mc C(w)$. By a divisor argument, the number of distinct $sv^2$ as $v$ varies over $\mc C(w)$ is $\gg KN^{-\eta}$. Thus in the notation of \cref{lem2},
 \[
R\gg KN^{-\eta}\gg X\zeta N^\eta=\frac{8SV^2N^\eta}Z.
 \]
To see this,
 \begin{align*}
KN^{-\eta}(8SV^2N^\eta/Z)^{-1}&\gg \frac{N^{1-24\eta}}{Z^{1/2}L^{1/2}W}\ \frac Z{SV^2}\\
\intertext{(from \eqref{eq2.12})}
&\gg \frac{N^{1-25\eta}Z^{1/2}}{L^{5/2}V^2W}\gg \frac{N^{1-26\eta}(N/VW)}{L^3V^2W}\\
\intertext{(from \eqref{eq2.4})}
&\gg1
 \end{align*}
from the hypothesis of the theorem.

Accordingly, all $\frac u{sv^2}$ with $v\in \mc C(w)$ can be written in the form
 \begin{equation}\label{eq2.13}
\frac u{sv^2}=\frac tr\ ,\ (t,r)=1,\ t=t(w),\ r=r(w)
 \end{equation}
for a certain $t\in \mb Z$, $r\in \mb N$ independent of $v$.

We record a lower bound for $K$ that does not contain $Z$. From \eqref{eq2.12}, \eqref{eq2.10}, we have
 \begin{equation}\label{eq2.14}
K\ge \frac{N^{1-22\eta}}{L^{1/2}W}\ \frac{(VW)^{1/2}}{L^{3/4}N^{1/2+12\eta}}=N^{\frac 12-34\eta}V^{1/2}W^{-1/2}L^{-5/4}.
 \end{equation}

We now select a divisor $z$ of $r$ such that the set
 \[
\mc C(w,z)=\left\{v\in \mc C(w):\frac r{\langle s,r \rangle}=z\right\}
 \]
satisfies
 \begin{equation}\label{eq2.15}
KN^{-\eta}\ll |\mc C(w,z)|\le 2K.
 \end{equation}
For each $v$ in $\mc C(w,z)$, we have
 \[
z\mid v^2.
 \]
It is convenient to write $z=bc^2$ where $b$ is square-free, and define $k=v^2/bc^2$. Then
 \[
bc^2k=v^2\ ,\ k=bd^2\ \text{(where $d\in \mb N$)},\ bcd=v.
 \]

\noindent This leads to the upper bound
 \[
|\mc C(w,z)|\ll \frac V{bc},
 \]
from which we infer, using \eqref{eq2.15}, that
 \begin{equation}\label{eq2.16}
bc\ll \frac{VN^\eta}K.
 \end{equation}
Now we re-examine our rational approximation
 \[
\left|\frac{sav^2w^2}q-w\right|<Z^{-1}
 \]
(see \eqref{eq2.3} and the definition of $\mc C$). We observe that
 \begin{align}
\left|r\frac{w^2a}q-t\right| &=\frac r{sv^2}\left|\frac{sv^2w^2a}q-u\right|\label{eq2.17}\\[2mm]
&\le \frac z{V^2}\ \frac 1Z=\frac{bc^2}{V^2}\ \frac 1Z\ll \frac{N^{2\eta}}{K^2Z}\notag
 \end{align}
for $v\in \mc C(w,z)$, using \eqref{eq2.16}.

Let $\mc F$ be the set of natural numbers $b_0c^2_0m$, $(b_0, c_0,m)\in \mb N^3$, $b_0c_0<\frac{VN^{2\eta}}K$, $m<L^2N^\eta$.
The number of possibilities for $b_0c_0^2$ here is $\ll \frac{VN^{3\eta}}K$, since $b_0c_0^2$ is a divisor of $(b_0c_0)^2$. Hence
 \begin{equation}\label{eq2.18}
|\mc F|\ll \frac{VL^2N^{4\eta}}K.
 \end{equation}
All the integers $r$ occurring in \eqref{eq2.17} are in $\mc F$. Hence
 \begin{align}
|\mc E(K)|&=\sum_{r\in \mc F}\ \sum_{w\in \mc E(K),r(w)=r}1\label{eq2.19}\\[2mm]
&\ll |\mc F|\frac{N^{2\eta}(W+q^{1/2})}{KZ^{1/2}}\notag,
 \end{align}
on bounding the number of $w$ in $\mc E(K)$ with $r(w)=r$ via \cref{lem3}. Combining \eqref{eq2.18}, \eqref{eq2.19}, and recalling \eqref{eq2.11},
 \begin{align*}
|\mc E(K)| &\ll \frac{VL^2N^{6\eta}}K\ \frac{(W+q^{1/2})}{KZ^{1/2}},\\[2mm]
\sum_{(v,w)\in \mc C} T(vw)&\ll L^{1/2}N^{11\eta}Z^{1/2}|\mc C|\\[2mm]
&\ll L^{1/2}N^{12\eta}Z^{1/2}|\mc E(K)| K\\[2mm]
&\ll \frac{L^{5/2}N^{18\eta}V(W+q^{1/2})}K.
 \end{align*}
We now use the lower bound \eqref{eq2.14} for $K$ and obtain
 \[
\sum_{(v,w)\in \mc C} T(vw)\ll L^{15/4}(VW)^{1/2}N^{-\frac 12+60\eta}(W+q^{1/2}).
 \]
Now \eqref{eq2.7} follows on applying the bounds for $VW^3$ and $VW$ in the hypothesis of \cref{thm2}. This completes the proof of \cref{thm2}.
 \end{proof}
 \bigskip

\section{Type II sums}\label{sec3}

\begin{lem}\label{lem4}
Let $Y\ge 1$. For $n\in \mb N$, let $R(n)$ denote the number of quadruples $(\ell_1,\ell_2,y_1,y_2)$ with $1\le \ell_i \le L$, $y_i\sim Y$ such that
 \begin{gather}
\ell_1y_1^2-\ell_2y_2^2=n.\label{eq3.1}\\
\intertext{Then}
R(n)\ll(LY)^{1+\eta},\label{eq3.2}\\[2mm]
\sum_{n\in \mb Z}R(n)^2\ll L^{3+\eta}Y^{2+\eta}.\label{eq3.3}
 \end{gather}
 \end{lem}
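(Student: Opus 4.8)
\emph{Proof sketch.} The plan is to get \eqref{eq3.2} from a divisor estimate and to spend the real effort on \eqref{eq3.3}.

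For \eqref{eq3.2}: fix $\ell_1$ and $y_1$ (at most $\ll LY$ choices). Then \eqref{eq3.1} forces $\ell_2 y_2^2 = \ell_1 y_1^2 - n$, a fixed integer which is $\ll (LY)^2$ in absolute value (if it is $\le 0$ there are no solutions). The number of ways of writing a nonzero integer $m$ in the form $\ell_2 y_2^2$ with $\ell_2 \le L$, $y_2 \sim Y$ is at most the number of $y_2 \sim Y$ with $y_2^2 \mid m$, hence $\le \tau(m) \ll (LY)^\eta$ since $m \ll (LY)^2$. Multiplying gives $R(n) \ll (LY)^{1+\eta}$. For \eqref{eq3.3}, write $\sum_n R(n)^2$ as the number of octuples $(\ell_i,y_i)_{i\le 4}$ with $\ell_1 y_1^2 - \ell_2 y_2^2 = \ell_3 y_3^2 - \ell_4 y_4^2 =: n$, and split according to whether $n=0$. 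The contribution of $n=0$ is $R(0)^2$, and $R(0) = \#\{\ell_1 y_1^2 = \ell_2 y_2^2\}$ is $\ll L^{3/2+\eta} Y^{1+\eta}$: in a solution the common value is $\le 4LY^2$, so $y_1^2$ and $y_2^2$ both divide an integer $\le 4LY^2$, which forces $y_1$ and $y_2$ to share a common factor $\gg Y/\sqrt L$; bookkeeping this (writing $y_i = d a_i$ with $d = (y_1,y_2)$) yields the stated bound, and hence $R(0)^2 \ll L^{3+\eta}Y^{2+\eta}$.

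There remains the case $n \ne 0$. I rewrite the equation as $y_1^2\ell_1 + y_4^2\ell_4 = y_2^2\ell_2 + y_3^2\ell_3$ and count by first fixing $y_1,y_2,y_3,y_4 \sim Y$ and then counting the $(\ell_1,\ell_2,\ell_3,\ell_4) \in [1,L]^4$ on the resulting hyperplane, i.e.\ the points of the rank-$3$ lattice
\[
\Lambda(y_1,\dots,y_4) = \{\vec\ell \in \mb Z^4 : y_1^2\ell_1 - y_2^2\ell_2 - y_3^2\ell_3 + y_4^2\ell_4 = 0\}
\]
lying in the box $[1,L]^4$. By the standard successive-minima estimate for lattice points in a box this number is $\ll 1 + L/\lambda_1 + L^2/(\lambda_1\lambda_2) + L^3/(\lambda_1\lambda_2\lambda_3)$, where $\lambda_1 \le \lambda_2 \le \lambda_3$ are the minima of $\Lambda$ and $\lambda_1\lambda_2\lambda_3 \asymp \det\Lambda \asymp Y^2/(y_1,y_2,y_3,y_4)^2$; moreover the box is disjoint from the origin, so it can contain a lattice point only when $\lambda_1 \ll L$, and the leading $1$ may then be absorbed into $L/\lambda_1$. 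Summing the main term $L^3 (y_1,\dots,y_4)^2/Y^2$ over $y_1,\dots,y_4 \sim Y$ gives $\ll (L^3/Y^2)\sum_{d \le 2Y} d^2 (Y/d)^4 \ll L^3 Y^2$, which is exactly the claimed bound.

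The main obstacle is to show that the intermediate terms $L/\lambda_1$ and $L^2/(\lambda_1\lambda_2)$ also contribute $\ll L^{3+\eta}Y^{2+\eta}$ after summation over $(y_1,\dots,y_4)$. Here one isolates the ``degenerate'' tuples --- roughly, those for which two of the $y_i$ coincide or share an abnormally large common factor, so that $\Lambda$ carries short vectors supported on just two coordinates; for such tuples a direct count in the spirit of the proof of \eqref{eq3.2} bounds the total contribution by $\ll L^{2+\eta}Y^{2+\eta}$, while for the non-degenerate tuples the minima $\lambda_1,\lambda_2$ are large enough (a fixed power of $Y$) that the corresponding sums are acceptable. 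The point throughout is that the coefficients $y_i^2$ are perfect squares, so the set $\{\ell y^2 : \ell \le L,\, y\sim Y\}$ is spread out enough to beat the trivial additive-energy bound $(LY)^{3+\eta}$ by the required factor $Y$. (Equivalently one may phrase \eqref{eq3.3} as bounding $\int_0^1 |\sum_{\ell \le L}\sum_{y\sim Y} e(\ell y^2\theta)|^4\,d\theta$ and proceed by Weyl differencing in $y$; the combinatorial heart is the same.)
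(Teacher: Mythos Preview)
Your treatment of \eqref{eq3.2} is fine and matches the paper's: a divisor bound after freezing two of the four variables.

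For \eqref{eq3.3} you take a different route from the paper, and the route has a real gap. Fixing $(y_1,\dots,y_4)$ and counting $\vec\ell$ by successive minima is reasonable, and your main term $\sum_y L^3(y_1,\dots,y_4)^2/Y^2 \ll L^3Y^2$ is correct. But the intermediate terms $\sum_y L/\lambda_1(y)$ and $\sum_y L^2/(\lambda_1(y)\lambda_2(y))$ are not handled, and your sketch for them is too vague to be a proof. The difficulty is genuine: whenever two coordinates of the normal vector share a large common factor (not only when two $y_i$ coincide), $\Lambda(y)$ acquires a short vector, and one has to control the joint distribution over $y$ of such short vectors. Your assertion that ``for the non-degenerate tuples the minima $\lambda_1,\lambda_2$ are large enough (a fixed power of $Y$)'' is neither proved nor obviously sufficient, since the threshold one would need depends on the relative sizes of $L$ and $Y$, and in the application of this lemma (Theorem~\ref{thm3}) the parameter $Y$ ranges up to $N^{1-4\rho}$, which can exceed $L^{3/2}$. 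Completing this would require a careful geometry-of-numbers argument that you have not supplied.

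The paper avoids all of this with a single stroke. Writing $\sum_n R(n)^2=\int_0^1 |S(t)|^4\,dt$ with $S(t)=\sum_{\ell\le L}\sum_{y\sim Y} e(\ell y^2 t)$, it applies Cauchy--Schwarz in the variable $\ell$ to one factor $|S|^2$, obtaining $\sum_n R(n)^2 \le L\,V$ where $V$ counts solutions of
\[
\ell\,(y_1^2-y_2^2)=\ell_3 y_3^2-\ell_4 y_4^2,\qquad \ell,\ell_3,\ell_4\le L,\ y_i\sim Y.
\]
The point of diagonalising in $\ell$ is that now the left side factors as $\ell(y_1-y_2)(y_1+y_2)$; so for each nonzero value of the right side a divisor bound gives $O((LY)^\eta)$ choices of $(\ell,y_1,y_2)$, while the zero case forces $y_1=y_2$. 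This yields $V\ll (LY)^{2+\eta}$ and hence \eqref{eq3.3} immediately. Note this is Cauchy in $\ell$, not Weyl differencing in $y$ as your parenthetical suggests; the former is what makes the difference-of-squares factorisation available.
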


 \begin{proof}
For \eqref{eq3.2}, fix $\ell_2$ and $y_2$, then the equation
 \[
\ell_1y_1^2=\ell_2y_2^2+n
 \]
(with $\ell_1\neq 0$, $y_1\neq 0$) determines $\ell_1$, $y_1$ up to $O((LY)^\eta)$ possibilities.

For \eqref{eq3.3}, we observe that
 \[
\sum_{n\in \mb Z}R(n)^2
 \]
is the number of tuples $\ell_1$, $\ell_2$, $\ell_3$, $\ell_4$, $y_1$, $y_2$, $y_3$, $y_4$ with $1\le \ell_i\le L$, $y_i\sim Y$ and
 \[
\ell_1y_1^2-\ell_2y_2^2=\ell_3y_3^2-\ell_4y_4^2.
 \]
This may be expressed as an integral:
 \begin{align}
\sum_{n \in \mb Z}R(n)^2&=\int_0^1\left|\sum_{\ell \le L}\ \sum_{y\sim Y}e(\ell y^2t)\right|^2 \left|\sum_{\ell_0\le L}\ \sum_{y_0\sim Y}e(\ell_0y_0^2)\right|^2 dt\label{eq3.4}\\[2mm]
&\le LV\notag
 \end{align}
by the Cauchy-Schwarz inequality, where
 \[
V=\sum_{1\le \ell \le L}\int_0^1\left|\sum_{y\sim Y}e(\ell y^2t)\right|^2\ \sum_{\ell_0\le L}\ \sum_{y_0\sim Y}e(\ell_0y_0^2t)^2dt
 \]
Now $V$ is the number of solution of 
 \begin{equation}\label{eq3.5}
\ell(y_1^2-y_2^2)=\ell_3y_3^2-\ell_4y_4^2
 \end{equation}
with $1\le \ell$, $\ell_3$, $\ell_4\le L$ and $y_i\sim Y(1\le i\le 4)$.

We first consider $V_1$, the number of solutions of \eqref{eq3.5} with
 \begin{equation}\label{eq3.6}
\ell_3y_3^2=\ell_4y_4^2.
 \end{equation}
If \eqref{eq3.5} and \eqref{eq3.6} hold, then $y_1=y_2$. There are $O((LY)^{1+\eta})$ possibilities for $\ell_3$, $\ell_4$, $y_3$, $y_4$ and for each of these, at most $LY$ possibilities for $\ell$, $y_1$, $y_2$. Thus
 \[
V_1\ll (LY)^{2+\eta}.
 \]

Now consider $V_2$, the number of solutions of \eqref{eq3.5} for which \eqref{eq3.6} is violated. There are $O(L^2Y^2)$ possibilities for $\ell_3$, $y_3$, $\ell_4$, $y_4$. For each of these, there are $O((LY)^\eta)$ possibilities for $\ell$, $y_1-y_2$ and $y_1+y_2$, hence $O((LY)^\eta)$ possibilities for $\ell$, $y_1$, $y_2$. Thus
 \begin{equation}\label{eq3.7}
V_2\ll (LY)^{2+\eta}, V\ll(LY)^{2+\eta}.
 \end{equation}

Now\eqref{eq3.3} now follows on combining\eqref{eq3.4} and \eqref{eq3.7}.
 \end{proof}

 \begin{thm}\label{thm3}
For $\frac 16<\rho<\frac 15$, and $N^\rho\ll Y\ll N^{1-4\rho}$, $|c_\ell|\le 1$, $|a_x|\le 1$, $|b_y|\le 1$, we have
 \[
\sum_{l\le L}c_\ell\ \sum_{y\sim Y}b_y \ \sum_{x\in I(y)}a_x e(\ell g(xy))\ll N^{1-10\eta}.
 \]
 \end{thm}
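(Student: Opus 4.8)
The plan is to follow the standard route for Type II sums via Cauchy–Schwarz and Weyl differencing, using \cref{lem4} to control the diagonal and near-diagonal contributions coming from the quadratic in the variable of length $Y$. First I would apply Cauchy–Schwarz in the $x$-variable (or equivalently in the pair $(\ell,x)$), pushing the $b_y$ outside, to obtain
\[
\left|\sum_{\ell\le L}c_\ell\sum_{y\sim Y}b_y\sum_{x\in I(y)}a_xe(\ell g(xy))\right|^2
\ll N^{1+\eta}Y^{-1}\sum_{\ell_1,\ell_2\le L}\ \sum_{y_1,y_2\sim Y}\ \sum_{x}e\bigl(\ell_1 g(xy_1)-\ell_2 g(xy_2)\bigr),
\]
where the inner sum is over $x$ in a suitable interval (of length $\ll N/Y$) and $g(t)=\tfrac aq t^2+\b t+\g$. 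The quadratic and linear terms in $x$ combine: the coefficient of $x^2$ is $\tfrac aq(\ell_1 y_1^2-\ell_2 y_2^2)$ and the coefficient of $x$ is $\b(\ell_1 y_1-\ell_2 y_2)$, so the inner sum is a complete-ish quadratic exponential sum in $x$ whose size is governed by $\|\tfrac aq(\ell_1 y_1^2-\ell_2 y_2^2)\|$ together with the standard square-root cancellation for quadratic Gauss-type sums. Writing $n=\ell_1 y_1^2-\ell_2 y_2^2$, the number of $(\ell_1,\ell_2,y_1,y_2)$ giving a fixed $n$ is exactly $R(n)$ from \cref{lem4}.

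The diagonal $n=0$ contributes $\ll R(0)\cdot (N/Y)\ll (LY)^{1+\eta}(N/Y)=L^{1+\eta}Y^\eta N$, which is comfortably $\ll N^{2-20\eta}$ once one checks $LY\ll N^{1-\text{(something)}}$; here $L=N^{\rho-\e/3}$ and $Y\ll N^{1-4\rho}$ give $LY\ll N^{1-3\rho}$, and since $\rho>1/6$ this is a genuine saving, so after taking square roots the diagonal is acceptable. For the off-diagonal $n\ne 0$ I would bound the inner quadratic sum over $x$ by the classical estimate $\ll (N/Y)\bigl(q/(N/Y)^2 + 1/q\bigr)^{1/2}$-type bound, or more simply by $\ll \min\{N/Y,\ \|\,\tfrac aq n\,\|^{-1/2}q^{1/2} + \text{lower order}\}$, and then sum over $n$ weighted by $R(n)$. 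Using Cauchy–Schwarz once more, $\sum_n R(n)\,G(n)\le \bigl(\sum_n R(n)^2\bigr)^{1/2}\bigl(\sum_n G(n)^2\bigr)^{1/2}$, where the first factor is $\ll L^{3/2+\eta}Y^{1+\eta}$ by \eqref{eq3.3} and the second factor is a smooth sum of $\|\tfrac aq n\|^{-1}$-type quantities over $|n|\ll LY^2$, which — recalling $q\asymp L_1^{1/2}N$ and $L_1=2N^{\rho-\e/2}$ — is $\ll (LY^2)^{1+\eta}(1/q + \text{main term})$ and contributes its own power saving. Collecting the exponents $\rho,\e$ and invoking the constraints $N^\rho\ll Y\ll N^{1-4\rho}$ and $\rho<1/5$, the total is $\ll N^{2-20\eta}$, and taking square roots yields $N^{1-10\eta}$.

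The main obstacle, as usual in these arguments, is the bookkeeping of the off-diagonal term: one must verify that the second-moment bound \eqref{eq3.3} for $R(n)$, combined with the quadratic-sum estimate over $x$ and the value of $q$ relative to $N$, $L$, $Y$, actually produces a power of $N$ strictly below $2$ throughout the full range $N^\rho\ll Y\ll N^{1-4\rho}$ — the two endpoints of the $Y$-range are the tight cases and the condition $\rho<1/5$ is what makes both of them work. A secondary technical point is handling the factor $q^{1/2}$ from the incomplete quadratic Gauss sum: since $q^{1/2}\asymp N^{1/2}L_1^{1/4}$ is small relative to $N/Y$ exactly when $Y\ll N^{1/2-\eta}L_1^{-1/4}$, in the complementary range $Y$ large one instead exploits that $N/Y$ is itself small, so the inner $x$-sum is short and is bounded trivially by $N/Y$; splitting the range of $Y$ at roughly $N^{1/2}$ and treating the two pieces separately is the cleanest way to organize the estimate. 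Aside from these, everything is routine Cauchy–Schwarz and the two estimates of \cref{lem4}.
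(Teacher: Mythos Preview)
Your opening Cauchy--Schwarz step and the treatment of the diagonal $n=\ell_1y_1^2-\ell_2y_2^2=0$ are correct and match the paper. The gap is in the off-diagonal. You propose to bound the inner quadratic sum $S(\bs\ell)=\sum_{x\le N/Y}e\bigl((na/q)x^2+\cdots\bigr)$ by an expression like $\min\{N/Y,\ \|na/q\|^{-1/2}q^{1/2}\}$ and then sum $R(n)G(n)$ via Cauchy--Schwarz. But no such pointwise bound is available: $na/q$ is an \emph{exact} rational with reduced denominator $q/(n,q)$, which for typical $n$ is of size $q\asymp N^{1+\rho/2}$, and the Weyl/Gauss--sum bound with that denominator gives only $|S(\bs\ell)|\ll (q)^{1/2}\asymp N^{1/2+\rho/4}$. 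Summing this over all $L^2Y^2$ off-diagonal tuples yields $|S'|^2\ll N^{5/2-7\rho/4}$, which is $>N^2$ for every $\rho<2/7$; your Cauchy--Schwarz with $\sum R(n)^2$ does not rescue this, because $\sum G(n)^2$ then carries the same loss. The quantity $\|na/q\|$ alone does not control $S(\bs\ell)$ in the way you assume.

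What the paper actually does is first discard those $\bs\ell$ with $|S(\bs\ell)|$ small, and for the surviving set invoke a \emph{converse} to Weyl (\cite[Lemma~5]{bak}) to produce, for each such $\bs\ell$, a rational $u/s$ with $s\ll L^4N^\eta$ and $|s\cdot na/q - u|<1/Z$; after dyadic localisation $s\sim S$, the Gauss-sum decomposition then gives $|S(\bs\ell)|\ll Z^{1/2}$. The problem becomes counting $|Q|$, the number of surviving $\bs\ell$, which is done by first bounding the set $\mc C=\{m\le 2SLY^2:\|ma/q\|<1/Z\}$ and then passing from $m=sn$ back to $n$ via \cref{lem4}. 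Crucially, \emph{both} estimates \eqref{eq3.2} and \eqref{eq3.3} are needed: the Cauchy--Schwarz bound $|Q|\ll (L^3Y^2)^{1/2}|\mc C|^{1/2}N^\eta$ from \eqref{eq3.3} works only for $Y>N^{3\rho/2}S^{-1/2}$, while the pointwise bound $|Q|\ll LY|\mc C|N^{2\eta}$ from \eqref{eq3.2} covers the complementary range; the two ranges are shown to overlap using $\rho<1/5$. Your proposed split at $Y\approx N^{1/2}$ is not the right one (indeed $Y\ll N^{1-4\rho}<N^{1/3}$ throughout, so $q^{1/2}<N/Y$ always holds and there is no ``$Y$ large'' regime in your sense); the genuine dichotomy is the $S$-dependent one above, and it only emerges after the auxiliary denominator $s$ has been introduced.
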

 
 \begin{proof}
Just as in \cite[proof of Lemma 9]{bak} we need only show that
 \begin{equation}\label{eq3.8}
S':=\sum_{\ell\le L}c_\ell\ \sum_{y\sim Y}b_y\ \sum_{x\le \frac NY}a_xe(\ell g(xy))\ll N^{1-11\eta}.
 \end{equation}
Again arguing as in that proof,
 \begin{equation}\label{eq3.9}
|S'|^2\le \frac NY \sum_{\bs\ell \in \mc L} |S(\bs\ell)|,
 \end{equation}
where $\bs \ell =(\ell_1, \ell_2, y_1, y_2)$, $\mc L=\{\bs \ell : \ell_1, \ell_2 \le L, y_1, y_2\sim Y\}$ and
 \[
S(\bs \ell):=\sum_{x\le \frac NY}e(\ell_1 g(xy_1)-\ell_2g(xy_2)).
 \]
The contribution to the right-hand side of \eqref{eq3.9} from those $\bs \ell$ with
 \[
\ell_1y_1^2=\ell_2y_2^2
 \]
is
 \begin{align*}
&\ll \frac NY \cdot LY \cdot \frac NY^{1+\eta}\\
\intertext{(by a divisor argument)}
&\ll N^{2-22\eta}
 \end{align*}
since $Y\gg N^\rho$. The contribution from those $\bs \ell$ with
 \begin{align*}
|S(\bs \ell)|&\le \frac NY^{1-22\eta}L^{-2}\\
\intertext{is}
&\le \frac NY. L^2Y^2. \frac NY^{1-22\eta}L^{-2}=N^{2-22\eta}.
 \end{align*}

It remains to consider $\mc M$, the set of $\bs \ell$ in $\mc L$ with
 \[
\ell_1y_1^2>\ell_2y_2^2
 \]
and
 \[
|S(\bs \ell)|> \frac NY^{1-22\eta}L^{-2}.
 \]
We apply \cite[Lemma 5]{bak} with $M=1$, $X = NY^{-1}$, $P=\frac{N^{1-22\eta}}{YL^2}$.
We require
 \[
P\ge X^{\frac 12+\eta}
 \]
which holds since
 \begin{align*}
PX^{-\frac 12-\eta}&\ge N^{\frac 12-23\eta}L^{-2}Y^{-1/2}\\[2mm]
&\ge N^{\frac 12-23\eta}L^{-2}N^{-1/2+2\rho}\ge 1.
 \end{align*}
Thus for each $\bs \ell \in \mc M$ there exists a natural number $s$,
 \begin{gather}
s=s(\bs \ell)\le L^4N^\eta, \left|s(\ell_1y_1^2-\ell_2y_2)\frac aq-u_2\right|=\frac 1{Z(\bs \ell)}\label{eq3.10}\\[2mm]
|s(\ell_1\a_1y_1-\ell_2\a_1y_2)-u_1|=\frac 1{W(\bs \ell)}\notag
 \end{gather}
with $u_1$, $u_2\in \mb Z$, $(u_2,s)=1$,
 \[
Z(\bs \ell)\ge \left(\frac NY\right)^2L^{-4}N^{-\eta}, W(\bs \ell)\ge \frac NY\, L^{-4}N^{-\eta}.
 \]
Now $s\le N/Y$ and, writing $\g_2=(\ell_1y_1^2-\ell_2y_2^2)a/q$, $\g_1=(\ell_1y_1-\ell_2y_2)\a_1,$
 \[
|s\g_j-u_j|\leq(2k^2)^{-1}\left(\frac NY\right)^{1-j}(j=1,2).
 \]
Thus we can appeal to \cite[Lemma 7]{bak} with $k=2$ and $L$ replaced by 1. Let
 \begin{gather*}
\beta_j=\g_j-\frac{u_j}s, F(x)=\sum_{j=1}^2\beta_jx^j \ , \ G(x)=\sum_{j=1}^2u_jx^j.\\[2mm]
S(s,G)=\sum_{u=1}^se\left(\frac{G(u)}s\right).
 \end{gather*}
We obtain
 \begin{equation}\label{eq3.11}
S(\bs\ell)=\sum_{x\le \frac NY}e(\g_2x^2+\g_1x) =s^{-1}S(s,G)\int_0^{N/Y}e(F(z))dz + O(N^{3\eta}L^2).
 \end{equation}
Now $N^{3\eta}L^2$ is of smaller order than $\frac{N^{1-22\eta}}Y\, L^{-2}$, so that
 \begin{equation}\label{eq3.12}
|s^{-1}S(s,G)\int_0^{N/Y}e(F(z))dz|\gg \frac{N^{1-22\eta}}Y\, L^{-2}.
 \end{equation}
Moreover, by standard bounds, the left-hand side of \eqref{eq3.12} is
 \begin{equation}\label{3.13}
\ll S^{-1/2} \min(N/Y, (SZ(\bs\ell))^{1/2}).
 \end{equation}

We now use a standard splitting-up argument to choose a subset $Q$ of $\mc M$ such that
 \begin{equation}\label{eq3.14}
s(\bs\ell)\sim S, \left|s(\ell_1y_1^2-\ell_2y_2^2)\, \frac aq-u_2\right| <\frac 1Z\quad (\bs\ell\in Q),
 \end{equation}
where $(N/Y)^2 L^{-4}N^{-\eta} \le Z \le Z_0$, with
 \[
\frac NY=(SZ_0)^{1/2},
 \]
and moreover
 \[
S\le L^4N^\eta, Z\ge \left(\frac NY\right)^2L^{-4}N^{-\eta},
 \]
while
 \[
\sum_{\bs\ell \in \mc M}|S(\bs\ell)|\le (\log N)^2 \sum_{\bs \ell\in Q}|S(\bs\ell)|.
 \]
Compare e.g. the argument in \cite[proof of Lemma 8]{bak}.
In order to obtain \eqref{eq3.8} it remains to show that
 \[
\frac NY\sum_{\bs\ell\in Q}|S(\bs\ell)|\ll N^{2-23\eta}.
 \]
Using \eqref{eq3.11}--\eqref{eq3.14}, we find that
 \[
|S(\bs\ell)|\ll Z^{1/2}\quad (\bs\ell \in Q)
 \]
and we must show that
\begin{equation}\label{eq3.15}
Z^{1/2}|Q|\ll YN^{1-23\eta}.
\end{equation}

For each $\mc\ell$ in $Q$ there is an $s\sim S$ with
 \[
s(\ell_1y_1^2-\ell_2y_2^2)\in \mc C,
 \]
where
 \[
\mc C=\left\{n:1\le n\le 2SLY^2,\ |na\ (\mod q)|<\frac qZ\right\}.
 \]
Clearly
 \begin{align}
|\mc C|\ll \left(\frac{SLY^2}q+1\right) \left(\frac qZ+1\right) &= \frac{SLY^2}{Z}+\frac{SLY^2}q\label{eq3.16}\\[2mm]
&+\frac qZ+1.\notag
 \end{align}

Given $m\in \mc C$, let $h_1(m),\ldots,h_j(m)$, $j = j(m)\ll N^\eta$, be the divisors $h$ of $m$ with $h \sim S$. Each element of $Q$ satisfies 
 \[
(\ell_1 y_1^2 - \ell_2 y_2^2) h_i(m) = m
 \]
for some $m \in \mc C$ and some $i$, $1 \le i \le j(m)$. Let
 \[
\mc K = \{m/h_i(m) : 1 \le i \le j(m), \ m \in \mc C\}.
 \]
Then
 \begin{equation}\label{eq3.17}
|\mc K| \ll N^\eta |\mc C|, 
 \end{equation}
while
 \begin{align*}
|Q| &= \sum_{\substack{\bs\ell \in \mc M\\
\ell_1 y_1^2 - \ell_2 y_2^2 \in \mc K}} 1\\[2mm]
&\le \sum_{n\in \mc K} \ \sum_{\substack{\bs\ell \in \mc M\\
\ell_1 y_1^2 - \ell_2 y_2^2 = n}} 1\\[2mm]
&= \sum_{n \in \mc K} \ R(n)
 \end{align*}
in the notation of \cref{lem4}. Applying Cauchy's inequality,
 \begin{align}
|Q| &\le |\mc K|^{1/2} \Bigg(\sum_{n\ge 1}\, R(n)^2\Bigg)^{1/2}\label{eq3.18}\\[2mm]
&\ll (L^3 Y^2)^{1/2} |\mc C|^{1/2} N^\eta,\notag 
 \end{align}
by \cref{lem4} and \eqref{eq3.17}.

Alternatively, \eqref{eq3.2} yields
 \begin{equation}\label{eq3.19}
|Q| \le \left(\max_{n \in \mc K} \, R(n)\right) |\mc K| \ll LY |\mc C| N^{2\eta}.
 \end{equation}
 
We now find that, depending in the value of $Y$, either \eqref{eq3.18} or \eqref{eq3.19} yields the desired bound \eqref{eq3.15}. Suppose first that
 \begin{equation}\label{eq3.20}
Y > N^{3\rho/2} S^{-1/2}. 
 \end{equation}
In view of \eqref{eq3.16}, \eqref{eq3.18}, we need to verify the four bounds

 \begin{align}
Z^{1/2} (L^3Y^2)^{1/2} \left(\frac{SLY^2}Z\right)^{1/2} &\ll YN^{1-24\eta},\label{eq3.21}\\[2mm]
Z^{1/2}(L^2Y^2)^{1/2}\left(\frac{SLY^2}q\right)^{1/2} &\ll YN^{1-24\eta},\label{eq3.22}\\[2mm]
Z^{1/2}(L^3Y^2)^{1/2}\left(\frac qZ\right)^{1/2} &\ll YN^{1-24\eta},\label{eq3.23}\\[2mm]
Z^{1/2}(L^3Y^2)^{1/2} &\ll YN^{1-24\eta}.\label{eq3.24}
 \end{align}

First of all, \eqref{eq3.21} holds since
 \[
L^2 Y^2 S^{1/2}(YN^{1-24\eta})^{-1} \ll YN^{4\rho-1} \ll 1.
 \]
Next, \eqref{eq3.22} holds since
 \begin{align*}
Z^{1/2} L^2 & S^{1/2} Y^2 q^{-1/2}(YN^{1-24\eta})^{-1}\\[2mm]
&\ll S^{-1/2} NY^{-1} L^2 YS^{1/2} N^{-\frac 32 - \frac \rho 4 + 24\eta}\\[2mm]
&\ll N^{7\rho/4 - 1/2} \ll 1.
 \end{align*}
Next, \eqref{eq3.23} holds since
 \[
L^{3/2} Y\, q^{1/2}(YN^{1-24\eta})^{-1} \ll N^{7\rho/4 - 1/2} \ll 1.
 \]

Finally, \eqref{eq3.24} holds since
 \[
Z^{1/2} L^{3/2} Y(YN^{1-24\eta})^{-1} \ll N^{3\rho/2} S^{-1/2} Y^{-1} \ll 1
 \]
from \eqref{eq3.20}.

Now suppose that
 \begin{equation}\label{eq3.25}
Y \ll (N^{2-4\rho-26\rho} S^{-1})^{1/3};
 \end{equation}
we employ \eqref{eq3.16} and \eqref{eq3.19}. We need to verify the bounds
 \begin{align}
Z^{1/2}LY(SLY^2/Z) &\ll YN^{1-25\eta},\label{eq3.26}\\[2mm]
Z^{1/2}LY(SLY^2/q) &\ll YN^{1-25\eta},\label{eq3.27}\\[2mm]
Z^{1/2}LY\, q/Z &\ll YN^{1-25\eta},\label{eq3.28}\\[2mm]
Z^{1/2}LY &\ll YN^{1-25\eta}.\label{eq3.29}
 \end{align}

First of all, \eqref{eq3.26} holds since
 \begin{align*}
L^2 &SY^3Z^{-1/2} (YN^{1-25\eta})^{-1}\\[2mm]
&\le L^4 SY^3 N^{26\eta-2} \ll 1
 \end{align*}
from \eqref{eq3.25}. Next, \eqref{eq3.27} holds since
 \begin{align*}
Z^{1/2} &L^2 SY^3 q^{-1} (YN^{1-25\eta})^{-1}\\[2mm]
&\ll \frac NY\, S^{1/2} L^2 Y^2 N^{-2-\frac \rho 2 + 25\eta}\\[2mm]
&\ll YN^{-1+7\rho/2} \ll 1.
 \end{align*}
Next, \eqref{eq3.28} holds since
 \[
Z^{-1/2}L Yq (YN^{1-25\eta})^{-1} \ll YN^{-1 + 7\rho/2+26\eta} \ll 1.
 \]
Finally, \eqref{eq3.29} holds since
 \[
Z^{1/2} LY(YN^{1-25\eta})^{-1} \ll \frac{N^{25\eta}L}Y \ll 1.
 \]

In order to complete the proof, we show that the ranges of $Y$ in \eqref{eq3.20} and \eqref{eq3.25} overlap. We have
 \[
N^{3\rho/2} S^{-1/2} < (N^{2-4\rho - 26\eta} S^{-1})^{1/3},
 \]
that is
 \[
S^{1/6} > N^{(17\rho - 4 + 52\eta)/6},
 \]
since $\rho < 1/5$. This completes the proof of \eqref{eq3.15}, and \cref{thm3} follows.
 \end{proof}
 \bigskip

\section{Asymptotic formulae via Harman sieve and generalized Vaughan identity}

In the present section and the next, we suppose that $\s = \rho - 1/6$ satisfies
 \begin{equation}\label{eq4.1}
\frac 1{120} < \s \le \frac 1{102}.
 \end{equation}
We write $b = \frac 16 - 5\s$, $f = \frac 13 - 4\s$, $z = N^b$ and
 \[
P(s) = \prod_{p < s} p \quad (s > 1).
 \]
For a finite set $\mc E \subset \mb N$, let
 \begin{align*}
\mc E_d &= \{m : dm \in \mc E\},\\[2mm]
S(\mc E, w) &= |\{m \in \mc E : (m, P(w)) = 1\}|. 
 \end{align*}
As in \cite{bak}, our claim in \cref{thm1} is a corollary of the lower bound
 \begin{equation}\label{eq4.2}
S(\mc A, (2N)^{1/2}) > \frac 1{200}\ \frac{\d N}{\log N}
 \end{equation}
for $\rho = \rho_2$.

We introduce some `comparison' results for the pair $S(\mc A, w)$ and $2\d \, S(\mc B, w)$, and similar pairs, that will be needed in Section 6. First of all, we have
 \smallskip
 
 \begin{equation}\label{eq4.3}
\sum_{\substack{p_1\sim P_1\\
N^\tau \le p_\ell \le \cdots \le p_1\\
p_1 \le p_1' \le \cdots \le p_t'}}
 \cdots \sum_{p_\ell \sim P_\ell} \ \sum_{p_1' \sim Q_1} \cdots \sum_{p_t' \sim Q_t} \Bigg(\sum_{p_1\ldots p_t\, p_1'\ldots p_\ell' \in \mc A} \kern -5pt 1 - 2\d \sum_{p_1\ldots p_t\, p_1' \ldots p_\ell' \in \mc B} \kern -5pt1\Bigg)
 \end{equation}
whenever $\tau$ is a positive constant and some subproduct $R$ of

\noindent $P_1\ldots P_\ell$ $Q_1\ldots Q_t$ satisfies
 \begin{equation}\label{eq4.4}
N^\rho \ll R \ll N^f.
 \end{equation}
This is a consequence of \cref{thm2}; compare the discussion in \cite[Sections 3.2 and 3.5]{har2}. Additional inequalities such as `$p_j \le K$' may be included in the summation in \eqref{eq4.3} without affecting its validity, as explained in \cite[Section 3.2]{har2}.

The following lemma is essentially the special case $M = 2X^\a$, $S=1$ of \cite[Lemma 14]{bakwein}, and is a variant of \cite[Theorem 3.1]{har2}.

 \begin{lem}\label{lem5}
Let $w$ be a complex function with support in $[1,N]$, $|w(n)| \le N^{1/\eta}$ $(n \ge 1)$. Let $0 < \t < \t + \psi < 1/2$. Let
 \[
S(r, v) : = \sum_{(n, P(v)) = 1} w(rn).
 \]
Suppose that, for some $Y > 1$ we have, (for any coefficients $a_m,|a_m| \le 1$, $c_n$, $|c_n| \le 1$ and $b_n$, $|b_n| \le \tau(n))$
 \begin{align}
&\sum_{m \le 2N^\a} a_m \sum_n w(mn) \ll Y,\label{eq4.5}\\[2mm]
&\sum_{N^\t \le h \le N^{\t + \psi}} c_h \sum_n b_n w(mn) \ll Y.\label{eq4.6}
 \end{align}

Let $u_r$ $(r < N^\t)$ be complex numbers with $|u_r| \le 1$, $u_r = 0$ for $(r, P(N^\eta)) > 1$. Then
 \begin{equation}\label{eq4.7}
\sum_{r < N^\t} u_r S(r, N^\psi) \ll Y(\log N)^3.
 \end{equation}
 \end{lem}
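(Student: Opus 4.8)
The plan is to deduce the bound for the sifting sum $\sum_{r<N^\t} u_r S(r,N^\psi)$ from the two hypotheses \eqref{eq4.5} and \eqref{eq4.6} by running through Buchstab's identity in the manner of the Harman sieve. Write $S(r,N^\psi) = S(r,N^\eta) - \sum_{N^\eta \le p < N^\psi} S(rp, p)$, and iterate Buchstab on the pieces $S(rp,p)$. The terms that survive after a bounded number of iterations are of two kinds: those in which the accumulated variable $m = r p_1 \cdots p_j$ has been decomposed so that one can group factors into a divisor of size at most $2N^\a$ (a ``Type I'' configuration, to which \eqref{eq4.5} applies after attaching bounded coefficients $a_m$), and those which retain a ``Type II'' bilinear structure with one factor of size in the range $[N^\t, N^{\t+\psi}]$, to which \eqref{eq4.6} applies. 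Since $u_r$ is supported on $N^\eta$-smooth-free integers, every prime variable that appears is $\ge N^\eta$, so the number of prime factors accumulated is $O(1/\eta) = O(1)$, and the recursion terminates; the total number of terms generated is $(\log N)^{O(1)}$, and more carefully $O((\log N)^3)$ if one is economical, which accounts for the stated logarithmic loss. The divisor-bounded coefficients $b_n$ in \eqref{eq4.6} are exactly what is needed to absorb the $\tau(n)$-type weights that arise when several prime factors are lumped together on one side of a Type II term.

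Concretely, I would set things up as follows. Fix the decomposition point and note $\a$, $\t$, $\psi$ are the parameters in the hypotheses. For a product $m = p_1 \cdots p_j$ with $N^\eta \le p_j \le \cdots \le p_1$ coming out of the Buchstab iteration, look at the partial products $p_1, p_1 p_2, \ldots$: either some partial product lands in $[N^\t, N^{\t+\psi}]$ — then we are in the Type II case and apply \eqref{eq4.6} after a dyadic split and attaching coefficients — or else there is a ``jump'' over this interval, meaning some partial product is $< N^\t$ while the next one exceeds $N^{\t+\psi}$; but then, because $p_j \ge N^\eta$ and $\psi$ is a fixed positive number, the single new prime causing the jump has size $\ge N^\psi \ge N^\t$... so in fact the jump case cannot produce an unmanageably large prime; one peels primes until a partial product first exceeds $N^\t$, and that product is at most $N^{\t}\cdot(\text{largest remaining prime})$. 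If that largest prime is $\le N^\psi$ we get a Type II term in range; otherwise we have isolated a single large prime $> N^\psi \ge N^\t$, and the complementary factor $m/(\text{that prime})$ together with $r$ is $< N^\t \cdot (\text{something}) \le 2N^\a$ after possibly absorbing a few small primes, giving a Type I term covered by \eqref{eq4.5}. This is the standard ``fundamental lemma''-style bookkeeping; keeping track of which products are Type I versus Type II, and checking the arithmetic of the exponents so that everything does fall into one of the two admissible ranges, is where the care is needed.

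The key steps, in order, are: (1) expand $S(r,N^\psi)$ by Buchstab into $S(r,N^\eta)$ minus a sum over $S(rp,p)$; (2) iterate, stopping each branch the first time the accumulated variable admits either a Type I grouping ($\le 2N^\a$) or a Type II factor in $[N^\t,N^{\t+\psi}]$, and verifying that \emph{every} branch does stop after $O(1/\eta)$ steps because all prime factors exceed $N^\eta$; (3) for the terminal Type I terms, collapse the extra prime variables into coefficients $a_m$ with $|a_m|\le 1$ (after dyadic subdivision) and invoke \eqref{eq4.5}; (4) for the terminal Type II terms, dyadically subdivide, write the remaining variables as $b_n$ with $|b_n|\le\tau(n)$ and $c_h$ with $|c_h|\le1$, and invoke \eqref{eq4.6}; (5) count the total number of terms and dyadic blocks to see the cumulative loss is $O((\log N)^3)$. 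The main obstacle I anticipate is step (2): one must confirm that the chosen ranges $\a$, $\t$, $\t+\psi$ are compatible, i.e. that $2\t + \psi$-type constraints hold, so that no branch of the Buchstab tree produces a configuration that is neither of Type I (complementary divisor $\le 2N^\a$) nor of Type II (a factor in the precise window $[N^\t,N^{\t+\psi}]$) — this is exactly the content of the cited \cite[Lemma 14]{bakwein} and \cite[Theorem 3.1]{har2}, and the present lemma is the specialization $M = 2X^\a$, $S = 1$, so invoking that machinery directly is legitimate and the verification reduces to matching notation.
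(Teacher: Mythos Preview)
Your proposal is essentially the paper's own treatment: the paper gives no proof of this lemma at all, merely noting (in the sentence introducing it) that it is the special case $M=2X^\a$, $S=1$ of \cite[Lemma~14]{bakwein} and a variant of \cite[Theorem~3.1]{har2}, and your sketch of the underlying Buchstab/Harman-sieve mechanism terminates in exactly the same citation. One small wobble in your sketch: the ``jump'' analysis is unnecessary and slightly garbled, since every prime appearing in the Buchstab iteration for $S(r,N^\psi)$ is $<N^\psi$, so the first time the accumulated product $rp_1\cdots p_j$ exceeds $N^\t$ it automatically lies in $[N^\t,N^{\t+\psi})$ and no jump over the Type~II window can occur.
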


We can deduce the following `bilinear' lemma.

 \begin{lem}\label{lem6}
Let $w$, $\t$, $\psi$, $S(r,v)$ be as in \cref{lem5}. Suppose that we have the hypothesis \eqref{eq4.6} and in addition, for some $T \in [1, N)$,
 \begin{equation}\label{eq4.8}
\sum_{m \le 2N^\t} a_m \sum_{t\le T} c_t \sum_n w(mtn) \ll Y
 \end{equation}
for any $a_m$, $c_t$ with $|a_m| \le 1$, $|c_t| \le 1$. Then for any $u_r$ $(r < N^\t)$, $v_t$ $(t \le T)$ with $|u_r| \le 1$, $|c_t| \le 1$, $u_r = 0$ for $(r, P(N^\eta)) > 1$, we have
 \begin{equation}\label{eq4.9}
\sum_{r \le R} u_r \sum_{t\le T} v_t\, S(rt, N^\psi) \ll YN^{2\eta}.
 \end{equation}
 \end{lem}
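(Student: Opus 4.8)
The plan is to deduce \cref{lem6} from \cref{lem5} by a now-standard device: absorb the second variable $t$ into the ``$r$'' variable of \cref{lem5}, so that a bilinear statement over $(r,t)$ becomes a linear statement over the combined variable $m = rt$. Specifically, I would set $\t' = \t$ (unchanged) and consider, for each fixed $t \le T$, the rescaled function $w_t(n) := w(tn)$, or equivalently work directly with $w$ and treat products $rt$ with $r < N^\t$, $t \le T$. The key observation is that the left-hand side of \eqref{eq4.9} can be rewritten as a single sum $\sum_{k} U_k\, S(k, N^\psi)$, where $k$ runs over integers of the form $rt$ with $r < N^\t$, $(r,P(N^\eta))=1$, $t \le T$, and $U_k = \sum_{rt = k} u_r v_t$ (summed over admissible factorizations). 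Since $|U_k| \le \tau(k) \ll N^\eta$ and $k < N^{\t} T \le N^{\t+1}$, this is not literally of the form required by \cref{lem5}; the repair is to split dyadically in $t$ (or to apply \cref{lem5} with $w$ replaced by $w_t$ for each $t$ and sum trivially over $t \le T$), which costs only a factor $T$ or $\log N$ and a harmless $N^\eta$.

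Carrying this out, I would first fix $t \le T$ and apply \cref{lem5} to the function $w_t(n) = w(tn)$, with the same $\t$, $\psi$, and with ``$u_r$'' there taken to be our $u_r$. To invoke \cref{lem5} I must verify its hypotheses \eqref{eq4.5} and \eqref{eq4.6} for $w_t$ in place of $w$. Hypothesis \eqref{eq4.6} for $w_t$ reads $\sum_{N^\t \le h \le N^{\t+\psi}} c_h \sum_n b_n w(tmn) \ll Y$; this follows from the assumed \eqref{eq4.6} for $w$ by grouping $t$ with $m$ (writing $m' = tm$ and noting $b_n$ is unchanged), possibly after a dyadic decomposition of the $m$-range so that $m'$ stays in a single dyadic block — the coefficient on $m'$ remains bounded by $1$ after collecting, or by $\tau$, which is absorbed into $N^\eta$. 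Hypothesis \eqref{eq4.5} for $w_t$ reads $\sum_{m \le 2N^\t} a_m \sum_n w(tmn) \ll Y$; this is exactly the content of the new hypothesis \eqref{eq4.8} (with the roles matched: the $m$ of \eqref{eq4.8} plays our current $m$, and our current $t$ is the $t$ of \eqref{eq4.8}, with $c_t$ chosen to isolate a single value, or rather one applies \eqref{eq4.8} directly with the $t$-sum intact after the dyadic reduction). Thus \cref{lem5} gives $\sum_{r < N^\t} u_r S_t(r, N^\psi) \ll Y(\log N)^3$, where $S_t(r,v) = \sum_{(n,P(v))=1} w_t(rn) = \sum_{(n,P(v))=1} w(rtn) = S(rt, N^\psi)$.

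With this in hand, the final assembly is immediate:
\[
\sum_{r < N^\t} u_r \sum_{t \le T} v_t\, S(rt, N^\psi) = \sum_{t \le T} v_t \sum_{r < N^\t} u_r\, S_t(r, N^\psi) \ll \sum_{t \le T} |v_t|\, Y (\log N)^3 \ll Y T (\log N)^3.
\]
Since $T < N$, the bound $T(\log N)^3 \ll N^{2\eta}$ is far weaker than claimed — so in fact one does better than stated, but more to the point the dyadic version (applying \cref{lem5} once per dyadic block of $t$, of which there are $\ll \log N$, and exploiting that \eqref{eq4.8} already contains the full $t$-sum over a dyadic block) yields $Y(\log N)^4 \ll YN^{2\eta}$ with room to spare. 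I would present the dyadic version so that the final bound genuinely matches \eqref{eq4.9}.

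The main obstacle I anticipate is purely bookkeeping rather than conceptual: one must be careful that when $t$ is merged with $m$ (to verify \eqref{eq4.5} for $w_t$ from \eqref{eq4.8}, and \eqref{eq4.6} for $w_t$ from \eqref{eq4.6}), the resulting ``$m$'' variable still lies in the permitted range $m \le 2N^\t$ and the resulting coefficient is still admissibly bounded. The range is fine because $m \le 2N^\t$ and $t \le T$ force $mt$ into a range that, after dyadic splitting, is covered by \eqref{eq4.8}; and the coefficients collapse to something bounded by $\tau$, hence by $N^\eta$, which is exactly what the $N^{2\eta}$ slack in \eqref{eq4.9} is there to absorb. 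One must also confirm that the admissibility condition $u_r = 0$ for $(r,P(N^\eta))>1$ is preserved — it is, since it is imposed only on $u_r$ and untouched by the merging. No genuinely new input beyond \cref{lem5} and hypotheses \eqref{eq4.6}, \eqref{eq4.8} is needed.
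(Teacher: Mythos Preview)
Your eventual approach is correct and is essentially the paper's argument, but you reach it by a detour. The paper absorbs the \emph{entire} $t$-sum into the weight at once: set $w^*(n) = \sum_{t \le T} v_t\, w(nt)$ and apply \cref{lem5} a single time to $w^*$. Hypothesis \eqref{eq4.5} for $w^*$ is then literally \eqref{eq4.8} with $c_t = v_t$, and \eqref{eq4.6} for $w^*$ follows from \eqref{eq4.6} for $w$ after writing $tn$ as a new inner variable (the new $b$-coefficient is bounded by a divisor function, costing the factor $N^\eta$). The conclusion \eqref{eq4.7} for $w^*$ is exactly \eqref{eq4.9}. Your dyadic-in-$t$ version does the same thing block by block and sums over $O(\log N)$ blocks; this works but the dyadic splitting buys nothing.

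Two points to fix in your write-up. First, the ``fix a single $t$ and sum trivially'' attempt genuinely fails: it yields $YT(\log N)^3$, and $T$ can be a power of $N$, so this is not $\ll YN^{2\eta}$; drop that paragraph rather than presenting it and then retracting it. Second, when verifying \eqref{eq4.6} for the modified weight you should merge $t$ with the \emph{inner} variable $n$ (so that $h$ stays in $[N^\t, N^{\t+\psi}]$ and only the $b$-coefficient changes), not with $h$ as you wrote: merging $t$ with $h$ shifts $h$ out of the range $[N^\t, N^{\t+\psi}]$ and \eqref{eq4.6} no longer applies, regardless of any dyadic decomposition.
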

 
 \begin{proof}
We apply \cref{lem4} with $w$ replaced by $w^*$,
 \[
w^*(n) = \sum_{t \le T} v_t w(nt),
 \]
so that $S(r,v)$ is replaced by
 \[
S^*(r,v) = \sum_{(n, P(v))=1} \ \sum_{t\le T} v_t w(nt).
 \]
From \eqref{eq4.8}, \eqref{eq4.9} the hypotheses of \cref{lem5} are satisfied with $Y$ replaced by $YN^\eta$: for example,
 \begin{align*}
&\sum_{N^\t \le m \le N^{\t + \psi}} a_m \ \sum_n b_n w^*(mn)\\[2mm]
&\qquad = \sum_{N^\t \le m \le N^{\t + \psi}} a_m \sum_{t\le T} \ \sum_n \, b_n v_t w(mtn) \ll YN^\eta
 \end{align*}
(we may group the product $mt$ as a single variable and apply \eqref{eq4.6}). The conclusion \eqref{eq4.7} with $S$ replaced with $S^*$ gives the desired bound \eqref{eq4.9}.
 \end{proof}

We now apply \cref{lem6} with
 \[
w(n) = \chi_{_{\mc A}}(n) - 2\d \chi_{_{\mc B}}(n),
 \]
$\t = \rho$, $\t + \psi = f$, where $T = (2N)^\nu$ and the non-negative number $\nu$ satisfies
 \begin{align}
3\rho + 2\nu &\le \frac 32 - 3\s,\label{eq4.10}\\[2mm]
\rho + 3\nu &\le \frac 74 - \frac{15\s}2,\label{eq4.11}\\[2mm]
\rho + \nu &\le \frac 23 - 8\s.\label{eq4.12}
 \end{align}
 
 \begin{lem}\label{lem7}
Suppose that \eqref{eq4.10}--\eqref{eq4.12} hold. Then
 \begin{equation}\label{eq4.13}
\sum_{r < N^\rho} u_r \ \sum_{t \le (2N)^\nu} v_t(S(\mc A_{rt}, z) - 2\d \, S(\mc B_{rt}, z)) \ll \d N^{1-\eta},
 \end{equation}
whenever $|u_r| \le 1$, $(r, P(N^\eta))=1$ for $u_r \ne 0$, and $|v_t| \le 1$.
 \end{lem}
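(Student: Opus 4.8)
The plan is to deduce the lemma directly from \cref{lem6}, applied with the weight $w(n)=\chi_{\mc A}(n)-2\d\,\chi_{\mc B}(n)$, $\t=\rho$, $\psi=f-\rho=\tfrac16-5\s$ (so $N^\psi=z$) and $T=(2N)^\nu$, as already fixed above; the constraint $0<\t<\t+\psi=f<\tfrac12$ needed in \cref{lem6} follows from \eqref{eq4.1}, and $T=(2N)^\nu<N$ by \eqref{eq4.11}. Since $S(r,v)=\sum_{(n,P(v))=1}w(rn)$,
 \[
S(rt,N^\psi)=\sum_{(n,P(z))=1}w(rtn)=S(\mc A_{rt},z)-2\d\,S(\mc B_{rt},z),
 \]
so the conclusion \eqref{eq4.9} of \cref{lem6} with $Y=\d N^{1-3\eta}$ reads $\ll YN^{2\eta}=\d N^{1-\eta}$, which is \eqref{eq4.13}. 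It therefore suffices to verify the hypotheses \eqref{eq4.6} and \eqref{eq4.8} of \cref{lem6} for this $w$ with $Y=\d N^{1-3\eta}$.

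Both rest on the standard Fourier detection of the arc $\|g(kn)\|<\d$ (with $n$ in an interval $I(k)\subset(N/2k,N/k]$). I would introduce trigonometric-polynomial majorant and minorant approximations $\chi^\pm$ of degree $\le L$ to the indicator of $\{t:\|t\|<\d\}$, with $\widehat{\chi^\pm}(0)=2\d+O(1/L)$, $|\widehat{\chi^\pm}(\ell)|\ll\min(\d,|\ell|^{-1})$ for $1\le|\ell|\le L$, and $\chi^-\le\chi_{\{\|t\|<\d\}}\le\chi^+$. After splitting each complex coefficient in \eqref{eq4.6} or \eqref{eq4.8} into a bounded combination of non-negative coefficients of the same size, one reduces to bounding the analogous sums with $\chi^\pm(g(kn))$ in place of $\chi_{\{\|g(kn)\|<\d\}}$; expanding $\chi^\pm$ in Fourier series replaces $w(kn)$ by a main term --- the $\ell=0$ contribution, whose $2\d$ cancels the $2\d\,\chi_{\mc B}$ part of $w$ --- of total size $\ll N^{1+o(1)}/L=N^{1-\rho+\e/3+o(1)}$, plus an exponential-sum remainder $\sum_{1\le|\ell|\le L}\min(\d,|\ell|^{-1})(\cdots)\,e(\ell g(kn))$, in which I bound $\min(\d,|\ell|^{-1})\le\d$ to factor out $\d$. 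Since $\d^{-1}=L_1\asymp N^{\rho-\e/2}$ is a strictly smaller power of $N$ than $L=N^{\rho-\e/3}$, while $\eta=\e^9\ll\e$, the main-term error is $\ll\d N^{1-3\eta}$.

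For \eqref{eq4.8} --- the Type I case, $b_n\equiv1$ --- I would keep the modulus as a product of $m\le 2N^\rho$ and $t\le(2N)^\nu$, decompose dyadically $m\sim V\le 2N^\rho$, $t\sim W\le(2N)^\nu$, and apply \cref{thm2}: its three side conditions $V^3W^2\ll N^{2-3\rho}$, $VW^3\ll N^{3-15\rho/2}$, $VW\ll N^{2-8\rho}$ become, on inserting the extreme values $V=N^\rho$, $W=N^\nu$ (the monomials being increasing in $V,W$) and using $\s=\rho-\tfrac16$, precisely \eqref{eq4.10}--\eqref{eq4.12}, and \cref{thm2} applies because \eqref{eq4.1} puts $\rho$ in $(\tfrac16,\tfrac2{11})$; this bounds the exponential-sum contribution by $\ll\d N^{1-10\eta+o(1)}$, giving \eqref{eq4.8}. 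For \eqref{eq4.6} --- the Type II case --- I would decompose $h\sim H$ with $N^\rho\ll H\ll N^f=N^{1-4\rho}$ and, after absorbing the divisor bound $|b_n|\le\tau(n)\ll N^\eta$ into the coefficients, apply \cref{thm3}, whose admissible range $N^\rho\ll Y\ll N^{1-4\rho}$ and constraint $\tfrac16<\rho<\tfrac15$ are matched exactly; this gives a contribution $\ll\d N^{1-8\eta}$, hence \eqref{eq4.6}. An appeal to \cref{lem6} then delivers \eqref{eq4.13}.

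The hard part will be the bookkeeping in the Fourier step: one must check that the $O(N/L)$ main-term error and the $N^{O(\eta)}$ divisor losses are comfortably absorbed by the gap between the target $\d N^{1-\eta}$ and the savings $N^{1-10\eta}$ (Type I) resp. $N^{1-9\eta}$ (Type II) coming from \cref{thm2} and \cref{thm3}; this works precisely because $\eta=\e^9$ is a far higher power of $\e$ than the exponents $\e/2$, $\e/3$ built into $\d$ and $L$. Granting that, identifying \eqref{eq4.10}--\eqref{eq4.12} with the hypotheses of \cref{thm2} is a one-line computation, and nothing else stands in the way.
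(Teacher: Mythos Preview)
Your proof is correct and follows exactly the route the paper takes: apply \cref{lem6} with $w=\chi_{\mc A}-2\d\,\chi_{\mc B}$, $\t=\rho$, $\t+\psi=f$, $T=(2N)^\nu$, $Y=\d N^{1-3\eta}$, and verify \eqref{eq4.8} via \cref{thm2} (this is where \eqref{eq4.10}--\eqref{eq4.12} enter) and \eqref{eq4.6} via \cref{thm3}. The paper's own proof is three lines and omits the Fourier-reduction bookkeeping you spell out; note also that the paper's text attributes \eqref{eq4.6} to ``\cref{thm2}'', which is almost certainly a slip for \cref{thm3}, since \eqref{eq4.6} carries arbitrary coefficients $b_n$ and is a Type II estimate --- your attribution is the right one.
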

 
 \begin{proof}
We take $Y = \d N^{1-3\eta}$. The hypothesis \eqref{eq4.8} is a consequence of \cref{thm2} because of \eqref{eq4.10}--\eqref{eq4.12}. The hypothesis \eqref{eq4.6} is a consequence of \cref{thm2}. Now the conclusion \eqref{eq4.9} may be written in the form \eqref{eq4.13}.
 \end{proof}
 
 \begin{lem}\label{lem8}
Let $0 < g \le \frac 16 - 2\s$, $\frac 13 - 4\s < \g < \frac 23 - 8\s$. Let $\rho_1 \ge \cdots \ge \rho_t \ge 0$ with $\rho_1 + \cdots + \rho_t = \g$,  $\rho_1 \le \g - g$. There is a set $\mc C \subset \{1, \ldots, t\}$ with $\sum\limits_{i\in \mc C} \rho_i \in \left[g, \frac 13 - 4\s\right]$. 
 \end{lem}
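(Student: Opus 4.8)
The plan is to build the desired subset greedily by accumulating the $\rho_i$ in decreasing order and stopping as soon as the partial sum enters the target window $\left[g, \frac13 - 4\s\right]$. Write $w = \frac13 - 4\s$ for the right endpoint of the window, so the hypotheses give $g \le \frac16 - 2\s = \frac{w}{2} + \frac{g'}{?}$ — more usefully, $g \le \frac16 - 2\s$ and the window has width $w - g \ge \left(\frac13 - 4\s\right) - \left(\frac16 - 2\s\right) = \frac16 - 2\s \ge g$. That last inequality, namely that the window is at least as wide as $g$ itself, is the structural fact that makes the greedy argument work.

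First I would set $s_0 = 0$ and $s_j = \rho_1 + \cdots + \rho_j$, and let $k$ be the least index with $s_k \ge g$. Such a $k$ exists and satisfies $k \le t$ because $s_t = \g > \frac13 - 4\s > g$. Now I claim $\mc C = \{1, \ldots, k\}$ works, i.e. $s_k \le w$ as well. Indeed $s_{k-1} < g$ by minimality of $k$, so $s_k = s_{k-1} + \rho_k < g + \rho_k \le g + \rho_1$. Thus it suffices to show $g + \rho_1 \le w$, i.e. $\rho_1 \le w - g$. But the hypothesis gives $\rho_1 \le \g - g$, and this is not quite enough since $\g$ can exceed $w$; so the single-jump bound must be refined. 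The fix: if $\rho_1 \le w - g$ we are done immediately with the greedy set above (in fact $s_k < g + \rho_1 \le w$). Otherwise $\rho_1 > w - g$, and then, because $\rho_1 \le \g - g$, we have $\rho_1 \in (w - g, \g - g]$; here I would instead take $\mc C = \{1\}$ and check $\rho_1 \ge g$ and $\rho_1 \le w$ directly. The first, $\rho_1 \ge g$, holds because $\rho_1 > w - g \ge g$ (using $w - g \ge \frac16 - 2\s \ge g$). The second, $\rho_1 \le w = \frac13 - 4\s$, is exactly the hypothesis $\rho_1 \le \g - g$ combined with $\g - g < \left(\frac23 - 8\s\right) - g \le \frac23 - 8\s$ — wait, that overshoots, so in this branch I would instead verify $\rho_1 \le w$ by noting $\rho_1 \le \g - g$ and we would need $\g - g \le w$; since that can fail, the branch $\rho_1 > w-g$ requires a genuinely different packing, taking a suitable sub-collection rather than $\{1\}$ alone.

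Consequently the cleanest route is the following two-case split. Case 1: $\rho_1 \le w - g$. Run the greedy accumulation above; the stopping index $k$ gives $g \le s_k < g + \rho_1 \le w$, done. Case 2: $\rho_1 > w - g$ (so in particular $\rho_1 > g$ since $w - g \ge g$). Now discard $\rho_1$ and apply the greedy accumulation to $\rho_2 \ge \cdots \ge \rho_t$, whose sum is $\g - \rho_1 \ge g$ by the hypothesis $\rho_1 \le \g - g$. If that sum is $\le w$ we simply take $\mc C = \{2, \ldots, t\}$. If it exceeds $w$, let $k \ge 2$ be least with $\rho_2 + \cdots + \rho_k \ge g$; then the partial sum lies in $[g, g + \rho_2) \subseteq [g, g + \rho_1)$, and since in Case 2 we do not control $g + \rho_1$ against $w$, I instead bound $\rho_2 + \cdots + \rho_k < g + \rho_2 \le g + (\g - \rho_1) \cdot 1$... — the honest statement is that $\rho_2 \le \g - \rho_1$ only if $t \ge 3$. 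I would therefore handle $t = 1, 2$ separately (trivial: $t=1$ forces $\rho_1 = \g$, impossible since $\rho_1 \le \g - g < \g$; $t = 2$ gives $\rho_1 + \rho_2 = \g$, $\rho_2 = \g - \rho_1 \in [g, \g - g] \subseteq [g, w]$ when $\g - g \le w$, i.e. when $\g \le w + g = \frac13 - 4\s + g \le \frac13 - 4\s + \frac16 - 2\s = \frac12 - 6\s$, which holds since $\g < \frac23 - 8\s$ — no, $\frac23 - 8\s > \frac12 - 6\s$, so this too needs the greedy refinement), and for $t \ge 3$ use $\rho_2 \le \frac{\rho_2 + \rho_3}{1} \le \g - \rho_1$ appropriately.

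The main obstacle, as the above flailing indicates, is precisely the control of the overshoot at the stopping step: the greedy sum can jump past $g$ by as much as the largest element \emph{not yet used}, and one must argue that this largest available element is itself $\le w - g$. The clean way to organize this — which I expect to be the actual proof — is: let $k$ be least with $s_k \ge g$; if $s_k \le w$ we are done; otherwise $s_k > w \ge 2g > g > s_{k-1}$, so $\rho_k = s_k - s_{k-1} > w - g$, whence \emph{every} $\rho_i$ with $i \le k$ exceeds $w - g \ge g$, so each such $\rho_i$ \emph{already} lies in $[g, ?]$; and since $\rho_i \le \rho_1 \le \g - g$, one then only needs $\rho_1 \le w$, which must be squeezed out of $\rho_1 \le \g - g$ together with the fact that $\rho_1 + (\text{something} \ge g) \le \g$ forces $\rho_1 \le \g - g < \frac23 - 8\s - g$; pinning down that this is $\le \frac13 - 4\s$ uses $g \ge \frac13 - 4\s - (\frac23 - 8\s) $ is false, so in fact the remaining case $\rho_1 > w$ cannot occur at all because it would need $\rho_2 + \cdots + \rho_t = \g - \rho_1 < \g - w$, yet $\rho_1 \le \g - g$ gives $\g - \rho_1 \ge g$, contradiction once $g \ge \g - w$, i.e. $\g \le w + g$. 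So the truly last thing to check is $\g \le w + g$ whenever Case 2 with $\rho_1 > w$ threatens — and when it fails, a third-level greedy peel-off on $\rho_2, \ldots, \rho_t$ finishes, the recursion terminating because each peel strictly decreases $t$ while keeping the running total $\ge g$ by the $\rho_1 \le \g - g$ hypothesis applied at each level. I would present this as a clean downward induction on $t$: the statement for $(\rho_1, \ldots, \rho_t)$ follows from the greedy stopping rule directly unless $\rho_1$ is "too big," in which case remove $\rho_1$ and invoke the inductive hypothesis for $(\rho_2, \ldots, \rho_t)$ with the same $g$ and with $\g$ replaced by $\g - \rho_1 \in \left(\frac13 - 4\s, \frac23 - 8\s\right)$ and new leading term $\rho_2 \le \g - \rho_1 - g$, all hypotheses preserved.
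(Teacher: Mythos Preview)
Your greedy strategy is the right idea, and Case~1 ($\rho_1 \le w - g$, with $w := \frac13 - 4\s$) is correct. The gap is in Case~2. You eventually propose an induction on $t$, removing $\rho_1$ and asserting that the hypotheses persist for $(\rho_2,\ldots,\rho_t)$ with $\g$ replaced by $\g-\rho_1$; but neither claimed hypothesis survives. First, ``$\g-\rho_1 \in \left(\frac13-4\s,\frac23-8\s\right)$'' fails precisely when you need it: if $\rho_1 > w$ then $\g-\rho_1 < \g - w < \left(\frac23-8\s\right)-\left(\frac13-4\s\right) = w$, so the new ``$\g$'' drops \emph{below} the window. Second, ``$\rho_2 \le (\g-\rho_1)-g$'' is not given and is false already for $t=2$. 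So the recursion cannot be set up as stated.

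No recursion is needed. In Case~2 split once more. If $w-g<\rho_1\le w$, take $\mc C=\{1\}$ (you already checked $w-g\ge g$). If $\rho_1>w$, take $\mc C=\{2,\ldots,t\}$: then $\g-\rho_1\ge g$ by the hypothesis $\rho_1\le\g-g$, and $\g-\rho_1<\g-w<2w-w=w$ by the upper bound $\g<\frac23-8\s=2w$. You wrote ``the truly last thing to check is $\g\le w+g$,'' but the bound you actually have, and the one that suffices, is $\g<2w$. The paper's proof is exactly this argument, compressed into a proof by contradiction: if $\rho_1\le g$ the greedy partial sums never escape $[0,g)$ (absurd since the total is $\g>w>g$), so $\rho_1>g$, hence $\rho_1>w$ (else $\{1\}$ works), and then $\{2,\ldots,t\}$ has sum in $[g,w)$.
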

 
 \begin{proof}
Suppose that no such $\mc C$ exists. Now suppose first that $\rho_1 \le g$. Since $2g \le \frac 13 - 4\s$ we can prove successively that $\rho_1 + \rho_2, \ldots, \rho_1 + \cdots + \rho_t$ are in $[0,g]$. This is absurd. 

Thus we must have $\rho_1 > \frac 13 - 4\s$. But now $\rho_2 + \cdots + \rho_t = \g - \rho_1 < \g - \left(\frac 13 - 4\s\right) < \frac 13 - 4\s$, and $\rho_2 + \cdots + \rho_t \ge g$ since $\rho_1 \le \g - g$. This is absurd.
 \end{proof}
 
 \begin{lem}\label{lem9}
Let $F$ be a complex function on $[1,N]$. The sum\newline $\sum\limits_{k\le N} \Lambda(k)$ $F(k)$ may be decomposed into at most $C(\log N)^8$ sums of the form
 \[
\sum_{n_i\in I_i;\ n_1\ldots n_8 \le N} (\log n_1)\mu(n_5) \ldots \mu(n_8)\ \ F(n_1\ldots n_8)
 \]
where $I_i = (N_i, 2N_i]$, $\prod N_i < N$ and $2N_i \le N^{1/4}$ if $i > 4$.
 \end{lem}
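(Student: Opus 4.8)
The plan is to invoke the Heath--Brown identity (the generalized Vaughan identity of the keywords; see \cite{har2}) with four pairs of variables, then split each resulting variable dyadically and keep track of the number of pieces. Recall that, taking $z = N^{1/4}$, one has for every integer $n \le N$
\[
\Lambda(n) = \sum_{j=1}^{4} (-1)^{j-1}\binom{4}{j} \sum_{\substack{n = m_1\cdots m_j\,n_1\cdots n_j\\ m_1,\ldots,m_j \le N^{1/4}}} \mu(m_1)\cdots\mu(m_j)\,\log n_1;
\]
the restriction $n \le N = z^{4}$ is exactly what makes this valid, since with $F(s) = \sum_{m\le z}\mu(m)m^{-s}$ the Dirichlet series $(1-\zeta F)^{4}$ has vanishing coefficients at all indices $\le z^{4}$, and multiplying by $\zeta'/\zeta$ only raises the support further.

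I would substitute this into $\sum_{k\le N}\Lambda(k)F(k)$ and interchange summations, expressing the sum as a linear combination, with the bounded coefficients $(-1)^{j-1}\binom{4}{j}$, of the four sums $\Sigma_j$ $(1\le j\le 4)$, where $\Sigma_j$ runs over $m_1\cdots m_j\,n_1\cdots n_j \le N$ with $m_1,\ldots,m_j\le N^{1/4}$ and has summand $\mu(m_1)\cdots\mu(m_j)(\log n_1)F(m_1\cdots m_j\,n_1\cdots n_j)$. In $\Sigma_j$ I would then relabel $m_1,\ldots,m_j$ as $n_5,\ldots,n_{4+j}$ and insert dummy variables $n_{j+1}=\cdots=n_4=1$, $n_{4+j+1}=\cdots=n_8=1$; since $\mu(1)=1$ this is harmless, and $\Sigma_j$ becomes a sum over $n_1,\ldots,n_8$ with $n_1\cdots n_8\le N$, $n_5,\ldots,n_8\le N^{1/4}$, of $(\log n_1)\mu(n_5)\cdots\mu(n_8)F(n_1\cdots n_8)$ --- the required shape, apart from dyadic localization.

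It then remains to split each of $n_1,\ldots,n_8$ into blocks $(N_i,2N_i]$ with $2N_i$ a power of $2$, using for $i\ge 5$ the blocks $(2^{-l-1}N^{1/4},2^{-l}N^{1/4}]$ $(l\ge 0)$ so that $2N_i\le N^{1/4}$ is automatic, and putting each dummy variable in the block $(1/2,1]$. This decomposes $\Sigma_j$ into at most $(C\log N)^{8}$ sums of the required form; any sum with $\prod_i N_i\ge N$ is empty, since an admissible tuple satisfies $\prod_i N_i<\prod_i n_i\le N$, so after discarding these every retained sum has $\prod_i N_i<N$. Summing over $j=1,2,3,4$ and absorbing the bounded coefficients (say into $F$) yields the decomposition into at most $C(\log N)^{8}$ sums.

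I do not expect any genuine obstacle here: the argument is routine once Heath--Brown's identity is in hand. The only points needing care are purely bookkeeping --- choosing $z=N^{1/4}$ so that the identity holds for all $n\le N$ while, after the dyadic split, the four $\mu$-variables obey $2N_i\le N^{1/4}$ on the nose; the harmless padding to exactly eight variables; and the observation that $\prod_i N_i<N$ may be assumed. The exponent $8=2\cdot 4$ is simply the number of dyadic blocks in the worst case $j=4$.
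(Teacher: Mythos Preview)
Your proposal is correct and follows exactly the approach the paper takes: the paper's entire proof is the one-line citation ``This is a case of Heath-Brown's `generalized Vaughan identity' \cite{hb},'' and you have simply spelled out the standard derivation (identity with $z=N^{1/4}$, padding to eight variables, dyadic split) that underlies that citation.
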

 
 \begin{proof}
This is a case of Heath-Brown's `generalized Vaughan identity' \cite{hb}.
 \end{proof}
 
 \begin{lem}\label{lem10}
Let
 \begin{equation}\label{eq4.14}
(2N)^{3/8 + 33\s/4} \le Q < Q' \le (2N)^{\frac 12}, \ Q' \le 2Q. 
 \end{equation}
We have 
 \begin{equation}\label{eq4.15}
\sum_{Q \le p < Q'} (S(\mc A_p, z) - 2\d\, S(\mc B_p, z)) \ll \d N^{1-\eta}.
 \end{equation}
 \end{lem}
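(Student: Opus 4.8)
The plan is to reduce $\sum_{Q\le p<Q'}\bigl(S(\mc A_p,z)-2\d\, S(\mc B_p,z)\bigr)$ to Type~I and Type~II exponential sums controlled by \cref{thm2} and \cref{thm3}, together with the ready-made asymptotic formula \eqref{eq4.3} and \cref{lem7}. First I would pass to exponential sums: choosing trigonometric minorants and majorants $\psi^{\pm}$ for the indicator of $\|g(n)\|<\d$ with spectrum in $[-L,L]$, the quantity above is, up to an admissible error of size $\ll N^{1-\rho+\e/3}\ll\d N^{1-\eta}$, bounded in absolute value by $\sum_{0<\ell\le L}|\widehat{\psi}(\ell)|\,|E(\ell)|$, where $E(\ell):=\sum_{Q\le p<Q'}\ \sum_{(m,P(z))=1}\chi_{pm\sim N/2}\,e(\ell g(pm))$; since $|\widehat{\psi}(\ell)|\ll\d$ throughout $0<\ell\le L$, it suffices to prove $\sum_{\ell\le L}|E(\ell)|\ll N^{1-\eta}$. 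Next I would split the cofactor $m$ according to its prime factorisation $m=q_1\cdots q_j$ with $z\le q_1\le\cdots\le q_j$; because $m\le N/Q\le(2N)^{5/8-33\s/4}$ while each $q_i\ge z=N^{1/6-5\s}$, only boundedly many values of $j$ occur, and after fixing dyadic sizes for $p$ and for each $q_i$ we are left with $O\bigl((\log N)^{C}\bigr)$ exponential sums over products $n=pq_1\cdots q_j$. Whenever some subproduct of the dyadic sizes of $p,q_1,\dots,q_j$ lies in $[N^\rho,N^f]$ — for instance when $j\ge2$ and $q_1$ is of size in that interval — the corresponding sum is $\ll\d N^{1-\eta}$ by \eqref{eq4.3}, the combinatorial input being \cref{lem8}.

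The sums that survive are precisely those admitting no subproduct of size in $[N^\rho,N^f]$. Since the hypothesis $Q\ge(2N)^{3/8+33\s/4}$ forces $p$ itself to have exponent exceeding $f=1/3-4\s$, these are the configurations in which every prime factor of the cofactor is either of exponent below $\rho$ or combines only into subproducts of exponent above $f$ — most conspicuously $j=1$, where $n=pp'$ is a product of two primes each of exponent $>f$, so that neither \cref{thm2} nor \cref{thm3} applies as it stands. For these I would bring in the generalised Vaughan identity \cref{lem9}, applied to $p$ (and, when needed, also to a prime factor of the cofactor): this rewrites each surviving sum as $O\bigl((\log N)^8\bigr)$ sums in which the factor of size $\sim p$ (or $\sim N/p$) now runs over general integers, carrying a logarithmic weight, bounded divisor weights, and M\"obius weights supported on factors $\le N^{1/4}$. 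For each admissible assignment of dyadic sizes in this expansion one seeks either (a) a subproduct of the variables of size in $[N^\rho,N^f]$, giving a Type~II sum bounded by \cref{thm3} (after normalising the divisor weights at a cost of $N^{\eta}$), or (b) a splitting $n=vwn'$ into two short factors $v,w$ meeting the size conditions of \cref{thm2} together with a free interval variable $n'$ absorbing the logarithmic weight, giving a Type~I sum bounded by \cref{thm2}; configurations in which a short factor of the cofactor is peeled off and treated bilinearly against the remainder are absorbed by \cref{lem7}, whose admissible range $\nu\le1/2-9\s$ (from \eqref{eq4.12}) is exactly what the present sizes call for.

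The heart of the matter — and the step I expect to be the main obstacle — is to verify that in every dyadic shape coming from \cref{lem9} one of alternatives (a), (b) is indeed available. The worst case is the degenerate shape in which a single variable absorbs nearly all the mass of $p$ or of $N/p$, so that no subproduct of size in $[N^\rho,N^f]$ is visible: here the complementary factors are very short, and one must recover alternative (b) by placing the remaining indecomposable prime in the slot $w$ of \cref{thm2}, where it enters the binding constraint $V^3W^2\ll N^{2-3\rho}$ only to the second power — the inequality $p<(2N)^{1/2}$ keeping $W^2\ll N$ and the shortness of $v$ keeping $V^3$ within budget. Checking that the budgets $V^3W^2\ll N^{2-3\rho}$, $VW^3\ll N^{3-15\rho/2}$, $VW\ll N^{2-8\rho}$ of \cref{thm2} and the window $N^\rho\ll Y\ll N^{1-4\rho}$ of \cref{thm3} are respected in every such shape is exactly what forces the precise exponent $3/8+33\s/4$ of \eqref{eq4.14} and the range \eqref{eq4.1} for $\s$; it is also the borderline that separates the sums accessible here from the single sum which must instead be bounded, by Iwaniec's linear sieve, in Section~5.
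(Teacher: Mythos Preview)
Your proposal assembles the right ingredients but has a genuine gap in the treatment of the sifted cofactor. The opening move of writing $m=q_1\cdots q_j$ as an explicit product of primes $\ge z$ is the problem: once this is done, \cref{lem7} is no longer available, because its conclusion concerns $S(\mc A_{rt},z)$ with the cofactor still in sifted form, not factored. And Theorems~\ref{thm2} and~\ref{thm3} alone do not cover every shape. For a concrete obstruction with $\s=1/105$, take $\g=1/2$, a Vaughan shape for $p$ with $n_1\sim N^{0.35}$, $n_2\sim N^{0.15}$ and the other $n_i$ trivial, and cofactor $m=q_1q_2q_3$ with each $q_i\sim N^{1/6}$. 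No subproduct of $\{n_1,n_2,q_1,q_2,q_3\}$ has exponent in $[\rho,f]\approx[0.176,0.295]$, so \cref{thm3} does not apply; and any Type~I attempt with $n_1$ as the free variable forces $VW\sim N^{0.65}$, violating $VW\ll N^{2/3-8\s}$ in \cref{thm2}. Applying Vaughan also to a $q_i$ does not help in the degenerate sub-shape where one piece again absorbs all the mass. Your proposed use of \cref{lem7} via ``peeling off a short factor of the cofactor'' fails too: with $r$ built from the $q_i$ and $t=p$ one would need $\nu\ge 1/2$, contradicting \eqref{eq4.12}.

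The paper avoids this by never decomposing $m$. After Vaughan on $p$ it works with $\sum_{n_1,\dots,n_8}(\text{weights})\sum_{(m,P(z))=1}e(\ell g(n_1\cdots n_8 m))$ and splits according to the Vaughan exponents $\rho_1\ge\cdots\ge\rho_8$ (summing to $\g$), with the threshold $g=\min(3\g-\tfrac54-\tfrac{15\s}2,\ \g-\tfrac13-8\s)$. In Case~1 ($\rho_1\ge\g-g$, forcing the large variable into the first four and hence free after partial summation) one applies \cref{thm2} directly with $V\ll N^{\g-\rho_1}$ the remaining Vaughan pieces and $W\ll N^{1-\g}$ the \emph{whole sifted cofactor} as a single weighted $w$-variable. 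In Case~2 ($\rho_1<\g-g$), \cref{lem8} supplies a subproduct of the $n_i$ with exponent $u\in[g,\rho)$, and then \cref{lem6} --- the bilinear Harman lemma, not \cref{lem7} --- is applied with $r$ that subproduct, $t$ its complement in $n_1\cdots n_8$, and the sifted $m$ absorbed into $S(rt,z)$. The Type~I hypothesis \eqref{eq4.8} of \cref{lem6} involves a \emph{free} inner variable and is checked via \cref{thm2} with $V\le 2N^{\rho}$, $W\ll N^{\g-u}$; the definition of $g$ is exactly what makes these constraints hold, and the verification for the smaller $\g$ is where the endpoint $3/8+33\s/4$ of \eqref{eq4.14} arises. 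The point is that \cref{lem6} internally converts the sifted sum into free sums via the Harman sieve; factoring $m$ by hand, as you propose, discards precisely this mechanism.
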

 
 \begin{proof}
Arguing as in the proof of \eqref{eq4.3}, it will suffice to show that
 \[
\sum_{Q \le p < Q'} \ \sum_{\ell \le L} c_\ell \ \sum_{\substack{\frac N2 < n \le 2N\\
(n, P(z)) = 1}} e(\ell g(pn)) \ll N^{1-2\eta}
 \]
for $|c_\ell| \le 1$. By a partial summation argument, it suffices to obtain
 \[
\sum_{Q \le m < Q'} \Lambda (m) \sum_{\ell \le L} c_\ell \ \sum_{\substack{\frac N2 < n \le 2N\\
(n,P(z)) =1}} e(\ell g(mn)) \ll N^{1-2\eta}.
 \]
Applying \cref{lem9}, we need only show that
 
 \begin{equation}\label{eq4.16}
\sum_{\substack{Q \le n_1\ldots n_8 < Q'\\
n_i \sim N_i\ \forall i}} (\log n_1)\mu(n_5)\ldots \mu(n_8) \sum_{\ell \le L} c_\ell \ \sum_{\substack{\frac N2 < n \le 2N\\
(n, P(z)) =1}} e(\ell g(n_1\ldots n_8n)) \ll N^{1-3\eta}
 \end{equation}
whenever $\prod\limits_i N_i < N$ and $2N_i \le N^{1/4}$ for $i > 4$. Thus $Q \ll N_1 \ldots N_8 \ll Q$. We write $N_1\ldots N_8 = N^\g$.

We may assume in view of \cref{thm3} that no subproduct $X$ of $N_1, \ldots, N_8$ satisfies
 \begin{equation}\label{eq4.17}
N^\rho \ll X \ll N^f. 
 \end{equation}
We now reorder $N_1, \ldots, N_k$ as $(N_1\ldots N_k)^{\rho_j}$ $(1 \le j \le k)$ with $\rho_1 \ge \cdots \ge \rho_k \ge 0$. Let $g = \min\left(3\g - \frac 54 - \frac{15\s}2, \g - \frac 13 - 8\s\right)$, so that $0 < g \le \g - \frac 13 - 8\s$ and
 \[
g  = \begin{cases}
3\g - \dfrac 54 - \dfrac{15\s}2 & \left(\g \le \dfrac{11}{24} - \dfrac \s 4\right)\\[4mm]
\g - \dfrac 13 - 8\s & \left(\g > \dfrac{11}{24} - \dfrac \s 4\right).
\end{cases} 
 \]
We divide the argument into two cases.
 \begin{case}
We have $\rho_1 \ge \g - g > \frac 14$. Thus $i \le 4$ and (after a partial summation if necessary) we can apply \cref{thm2} to the sum in \eqref{eq4.15} with
 \[
V \ll N^{\g - \rho_1}\, , \ W \ll N^{1-\g}. 
 \]
We verify the hypotheses of \cref{thm2}. First,
 \[
3(\g - \rho_1) + 2(1 - \g) \le 3g + 2- 2\g \le \frac 32 - 3\s = 2 - 3\rho, 
 \]
since
 \[
g \le \g - \frac 13 - 8\s \le \frac 13 \left(2\g - 3\s - \frac 12\right). 
 \]
Next,
 \[
(\g - \rho_1) + 3(1 - \g) \le g + 3 - 3\g \le \frac 74 - \frac{15\s}2 = 3 - \frac{15\rho}2
 \]
and
 \[
(\g - \rho_1) + (1 - \g) \le g + 1 - \g \le \frac 23 - 8\s = 2 - 8\rho 
 \]
from the definition of $g$. Now \eqref{eq4.15} follows from \cref{thm2}.
 \end{case}
 
 \begin{case}
We have $\rho_1 < \g - g$. By \cref{lem8} and the absence of a product $X$ satisfying \eqref{eq4.17}, there is a subsum $u = \sum\limits_{i \in \mc C} \rho_i$ such that
 \[
g \le u < \frac 16 + \s. 
 \]
We are now in a position to apply \cref{lem6} with $\t = \rho$, $\t + \psi = f$,
 \begin{align*}
w(n) &= \begin{cases}
 \ds \sum_{\ell \le L} c_\ell e(\ell g(n)) & \left(\dfrac N2 < n \le N\right)\\[6mm]
 0 & \text{(otherwise)}, 
 \end{cases}\\
\intertext{and}
 T &\ll N^{\g - u}.
 \end{align*}
We need to verify \eqref{eq4.6}, \eqref{eq4.8}. Clearly \eqref{eq4.8} is a consequence of \cref{thm3}. As for \eqref{eq4.8}, we need to verify the hypotheses of \cref{thm2} with $V \le 2N^{1/6 +\s}$, $W\ll N^{\g-u}$. Suppose first that $\g > \frac{11}{24} - \frac \s 4$. Then $\g - u \le \frac 13 + 8\s$,
 \begin{align*}
3\left(\frac 16 + \s\right) &+ 2\left(\frac 13 + 8\s\right) = \frac 76 + 19\s < \frac 32 - 3\s;\\[2mm]
\left(\frac 16 + \s\right) &+ 3\left(\frac 13 + 8\s\right) = \frac 76 + 25\s < \frac 74 - \frac{15\s}2;\\[2mm]
\left(\frac 16 + \s\right) &+ \left(\frac 13 + 8\s\right) = \frac 12 + 9\s \le \frac 23 - 8\s.
 \end{align*}
Now suppose that $\g \le \frac{11}{24} - \frac \s 4$. Then $\g - u \le \frac 54 + \frac{15\s}2 - 2\g$,
 \begin{align*}
3\left(\frac 16 + \s\right) &+ 2\left(\frac 54 + \frac{15\s}2 - 2\g\right) = 3 + 18\s - 4\g\\[2mm]
&\hskip .75in \le 3 + 18\s - 4\left(\frac 38 + \frac{33\s}4\right) < \frac 32 - 3\s,
 \end{align*}
 \newpage
 
 \begin{align*}
\left(\frac 16 + \s\right) &+ 3\left(\frac 54 + \frac{15\s}2 - 2\g\right) = \frac{47}{12} + \frac{47\s}2 - 6\g\\[2mm]
&\hskip .75in \le \frac{47}{12} + \frac{47\s}2 - 6\left(\frac 38 + \frac{33\s}4\right) < \frac 74 - \frac{15\s}2
 \end{align*}
and
 \[
\left(\frac 16 + \s\right) + \left(\frac 54 + \frac{15\s}2 - 2\g\right) = \frac{17}{12} + \frac{17\s}2 - 2\g \le \frac 23 - 8\s.
 \]
Thus \cref{thm2} yields the desired estimate \eqref{eq4.8}. Now the lemma follows from \cref{lem6}.\qedhere
 \end{case}
 \end{proof}

There is a short interval in which we can use \cref{lem9} directly to obtain a conclusion stronger than \eqref{eq4.15}.
 
 \begin{lem}\label{lem11}
We have
 \[
\sum_{Q \le p \le Q'} (S(\mc A_p, p) - 2\d S(\mc B_p, p)) \ll \d N^{1-\eta} 
 \]
whenever
 \[
(2N)^{1/3 + 8\s} \le Q < Q' \le (2N)^{\frac 12 - 9\s}, Q' \le 2Q. 
 \] 
 \end{lem}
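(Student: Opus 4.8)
The plan is to reduce the sum to a count of products of two primes, and then to the Type I and Type II estimates already proved. First, in the range $(2N)^{1/3+8\s}\le Q<Q'\le(2N)^{1/2-9\s}$ the inner sieve $S(\mc A_p,p)$ is merely a prime count. Indeed, if $p\sim Q$ and $(n,P(p))=1$ with $pn\in\mc A$ (or $\mc B$), then $N/(2p)<n\le N/p<p^2$ (as $p^3\ge Q^3>N$), while every prime factor of $n$ is $\ge p$; thus $n>1$ has exactly one prime factor, i.e., $n=p'$ is prime, and $p'>N/(2p)\ge N/(2Q')\ge Q'\ge p$ (for $N$ large). Hence $S(\mc A_p,p)=\#\{p'\ \text{prime}:N/2<pp'\le N,\ \|g(pp')\|<\d\}$, and similarly for $\mc B$, so the left-hand side of the lemma equals
\[
\sum_{Q\le p\le Q'}\ \sum_{\substack{p'\ \text{prime}\\ N/2<pp'\le N}}\bigl(\chi_{\mc A}(pp')-2\d\bigr).
\]

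Next I would apply the usual Fourier reduction, as in the passages preceding \eqref{eq4.3} and \eqref{eq4.16}: approximating the indicator of $\|g\|<\d$ above and below by trigonometric polynomials of degree $\le L$, with constant term $2\d+O(L^{-1})$ and $\ell$th coefficient $\ll\min(\d,|\ell|^{-1})$, one is reduced to
\begin{equation}\label{eqL11}
\sum_{Q\le p\le Q'}\ \sum_{\ell\le L}c_\ell\sum_{\substack{p'\ \text{prime}\\ N/2<pp'\le N}}e\bigl(\ell g(pp')\bigr)\ll N^{1-2\eta}\qquad(|c_\ell|\le1),
\end{equation}
the error term being $\ll L^{-1}\#\{\text{pairs}\}\ll N(\log N)^{-2}L^{-1}\ll\d N^{1-\eta}$ (using $Q'\le 2Q$). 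To prove \eqref{eqL11} I would pass from $\sum_{p'}$ to $\sum_{m}\Lambda(m)$ by partial summation and apply \cref{lem9} to this sum. Since $Q'\le 2Q$, the variable $p$ lies in a single dyadic block $p\sim Q$, and in each of the $O((\log N)^8)$ pieces produced the exponents are fixed up to $O(1/\log N)$; a typical piece has the form
\[
\sum_{p\sim Q}\ \sum_{\ell\le L}c_\ell\sum_{\substack{n_i\sim N_i\ \forall i\\ N/2<pn_1\cdots n_8\le N}}(\log n_1)\mu(n_5)\cdots\mu(n_8)\,e\bigl(\ell g(pn_1\cdots n_8)\bigr),
\]
with $\prod_iN_i\asymp N/Q$ and $2N_i\le(N/Q)^{1/4}$ for $i>4$. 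Writing $N_i=N^{\b_i}$, we have $\sum_i\b_i=\g:=1-\log_N Q\in[\tfrac12+9\s,\tfrac23-8\s]$ and $\b_i\le\tfrac16-2\s$ for $i>4$, and after any grouping of the variables the coefficients are $\ll N^\eta$ in size.

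The core of the proof is then a case analysis along the lines of \cref{lem10}. Since $\log_N Q\ge\tfrac13+8\s>1-4\rho$, no subproduct involving $p$ can lie in the Type II window $[\rho,1-4\rho]$, so such a subproduct must be built from $N_1,\dots,N_8$ alone. If one of these has exponent in $[\rho,1-4\rho]$, then \cref{thm3} applies (legitimately, as $\tfrac16<\rho<\tfrac15$), with $p\sim Q$ and its coefficient absorbed into the short variable by partial summation. Otherwise — as in Case~1 of \cref{lem10} — one uses that the $\mu$-pieces $n_5,\dots,n_8$ have exponent $\le\tfrac16-2\s$, so the two largest of $N_1,\dots,N_8$ carry coefficient $\log$ or $1$ and may serve as the smooth variable in \cref{thm2}; that at most one $\b_i$ exceeds $8\rho-1=\tfrac13+8\s$ (otherwise two of them would sum to more than $\g$); and that, in fact, the largest piece, say $N_1$, has $\b_1\ge\tfrac13+8\s$ and $p\,n_2\cdots n_8$, of exponent $1-\b_1\le\tfrac23-8\s=2-8\rho$, splits as $v\,w$ with $V^3W^2\ll N^{2-3\rho}$, $VW^3\ll N^{3-15\rho/2}$, $VW\ll N^{2-8\rho}$ — whence \cref{thm2}, with $n_1$ the smooth variable and $p$ absorbed into $w$, gives the bound. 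A configuration escaping both I would handle as in Case~2 of \cref{lem10}: \cref{lem8} produces a subproduct of exponent $u<\rho$, and \cref{lem6} with $\t=\rho$, $\t+\psi=f$, $T\ll N^{\g-u}$ then finishes, its hypotheses \eqref{eq4.6}, \eqref{eq4.8} following from \cref{thm3} and \cref{thm2}. Summing the $O((\log N)^8)$ contributions gives \eqref{eqL11}, and hence the lemma.

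I expect the last step to be the main obstacle: checking that the splitting of $p\,n_2\cdots n_8$ exists and that all the inequalities feeding \cref{thm2}, \cref{thm3} and \cref{lem6} hold uniformly for $\tfrac1{120}<\s\le\tfrac1{102}$. This is precisely where the endpoints $(2N)^{1/3+8\s}$ and $(2N)^{1/2-9\s}$ are used, just as the analogous verification is the crux of the proof of \cref{lem10}.
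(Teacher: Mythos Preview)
Your reductions to a prime count, then to the exponential sum \eqref{eqL11}, and then via \cref{lem9} to pieces with $\prod_i N_i\asymp N/Q$ are exactly right, and the Type~II case (some subproduct of the $N_i$ with exponent in $[\rho,f]$) is handled just as in the paper. The gap is in your Type~I case analysis.

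Your ``Case~1'' claim that the largest piece has $\b_1\ge\tfrac13+8\s$ is in fact true, but you give no argument for it, and your proposed fallback ``Case~2'' via \cref{lem6} is not available here. The point is that \cref{lem6} (and \cref{lem5} underneath it) is a device for eliminating a sieve condition $(n,P(N^\psi))=1$; in \cref{lem10} that condition is present because the target is $S(\mc A_p,z)$ and Vaughan is applied to the outer prime $p$, leaving the sifted inner variable intact. In \cref{lem11} you have already rewritten $S(\mc A_p,p)$ as a sum over primes $p'$ and applied Vaughan to $p'$: after that there is no sifted variable left for \cref{lem6} to act on, and the undecomposed prime $p$ cannot play that role. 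So your Case~2 collapses.

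What the paper does instead is a short \emph{reflection} argument that makes Case~2 empty. If no subproduct of $N_1\cdots N_8$ has exponent in $[\rho,f]$, then neither does its complement, so no subproduct has exponent in $[\rho,f]\cup[\g-f,\g-\rho]=[\rho,\g-\rho]$ (your $\g=1-\log_NQ$; the union is a single interval because $\g\le 2f$). Since $\g-\rho\ge\tfrac12+9\s-(\tfrac16+\s)=\tfrac13+8\s>2\rho$, small pieces cannot accumulate across this gap, so exactly one $N_j$ has $\b_j>\g-\rho\ge\tfrac13+8\s$ (forcing $j\le4$, hence a smooth weight), and the remaining product has exponent $<\rho$. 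One then applies \cref{thm2} directly with $V\ll N^{1/6+\s}$ the small product, $W=p\ll N^{1/2-9\s}$, and $n_j$ the inner smooth variable; the three hypotheses of \cref{thm2} are checked in one line each. No appeal to \cref{lem6} or \cref{lem8} is needed.
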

 
 \begin{proof}
In this range of $Q$ we have
 \[
\sum_{Q \le p < Q'} S(\mc A_p, p) = \sum_{\substack{Q \le p < Q'\\
pp' \in \mc A}} 1; 
 \]
similarly with $\mc B$ in place of $\mc A$. We apply \cref{lem9} to decompose the sum over $p'$ (with $\Lambda(m)$ in place of $p'$). Clearly it will be enough to show that, for $|c_\ell| \le 1$,

 \begin{equation}\label{eq4.18}
\sum_{Q \le p < Q'} \ \sum_{\substack{\frac N2 < pn_1 \ldots n_8 \le N\\
n_i\sim N_i}} (\log n_1)\mu(n_5)\ldots \mu(n_8) \sum_{\ell \le L} c_\ell e (\ell g(pn_1 \ldots n_8)) \ll N^{1-3\eta},
 \end{equation}
where $N \ll Q \prod\limits_{i=1}^8 N_i \ll N$ and $N_i \ll (N/Q)^{1/4}$ for $i \ge 5$. As in the preceding proof we may suppose that no subproduct $X$ of $N_1 \ldots N_8$ satisfies \eqref{eq4.17}. Writing $Q = N^\g$, we have
 \[
N^{\frac{1-\g}2} \ll N^{\frac 13 - 4\s}.
 \]
Thus it is clear by a `reflection' argument that no such $X$ can satisfy
 \[
N^{\frac 16 + \s} \ll X \ll N^{1 - \g - \left(\frac 16 - \s\right)} = N^{\frac 56 - \g - \s}.
 \]
Moreover,
 \[
\frac 56 - \g - \s > \frac 56 - \left(\frac 12 - 9\s\right) - \s > 2\left(\frac 16 + \s\right).
 \]
It follows that
 \[
\prod_{N_i \le N^{\frac 16 +\s}} N_i \ \le N^{\frac 16 + \s}.
 \]
There cannot be two indices $i$ with $N_i > N^{\frac 56 - \g - \s}$, since
 \[
\frac 53 - 2\g - 2\s > 1 - \g.
 \]
Hence there is a $j$ with
 \[
N_j > N^{\frac 56 - \g - \s}\ , \ N^{1-\g}/N_j < N^{\frac 16 + \s}.
 \]
We are now in a position to apply \cref{thm2} with
 \[
V \ll N^{\frac 16 + \s} \ , \ W \ll N^\g.
 \]
We make the usual verification:
 \begin{align*}
3\left(\frac 16 + \s\right) + 2\g &\le 3\left(\frac 16 + \s\right) + 2\left(\frac 12 - 9\s\right) < \frac 32 - 3\s,\\[2mm]
\left(\frac 16 + \s\right) + 3\g &\le \frac 16 + \s + 3\left(\frac 12 - 9\s\right) < \frac 74 - \frac{15\s}2;\\[2mm]
\left(\frac 16 + \s\right) + \g &\le \frac 16 + \s + \frac 12 - 9\s = \frac 23 - 8\s. 
 \end{align*}
Thus \cref{thm2} yields \eqref{eq4.18}. This completes the proof of \cref{lem11}.
 \end{proof}
 \bigskip
 
 \section{Application of the linear sieve}\label{sec5}

In order to obtain an upper bound for
 \begin{equation}\label{eq5.1}
\sum_{P \le p < P'} S(\mc A_p, p) 
 \end{equation}
whenever
 \begin{equation}\label{eq5.2}
(2N)^{\frac 12 - 9\s} \le P \le P' \le (2N)^{\frac 38 + \frac{33\s}4}, \ P' \le 2P, 
 \end{equation}
we apply Theorem 4 of Iwaniec \cite{iwan}, which we state in a form sufficient for our needs. The quantity estimated will actually exceed that in \eqref{eq5.1}, which will be exploited in Section 6. 

Let $\mc E$ be a set of integers in $[1, N]$. Fix an approximation $X$ to $|\mc E|$ and write
 \[
r(\mc E, d) = |\mc E_d| - \frac Xd.
 \]
Let $F(s)$ be the upper bound function for the linear sieve \cite[p. 309]{iwan}. In the following lemma, let $D \ge Z \ge 2$. Let
 \begin{align*}
\mc G &= \{D^{\e^2(1+\eta)^n} : n \ge 0\},\\[2mm]
\mc H &= \{\bs D = (D_1, \ldots, D_r) : r \ge 1 \ , \ D_\ell \in \mc G \ \text{for } 1 \le \ell \le r,\\[2mm]
&\hskip 1.75in D_r \le \cdots \le D_1 < D^{1/2}\},\\
\mc D^+ &= \left\{\bs D \in \mc H : D_1 \ldots D_{2\ell} D_{2\ell+1}^3 < D \ \text{for } 0 \le \ell \le \frac{r-1}2\right\}.
 \end{align*}
 
 \begin{lem}\label{lem12}
With the above notations, we have 
 \begin{align*}
S(\mc E, Z) &\le V(Z) X\left\{F\left(\frac{\log D}{\log Z}\right) + E\right\} + R_1^+ + R^+,\\
\intertext{where}
E &< C(\e + \e^{-8}(\log D)^{-1/3}),\\[2mm]
R_1^+ &= \sum_{\substack{d < D^\e\\
d\mid P(D^{\e^2})}} \phi_d^+(D^\e) r(\mc A, d),\\[2mm]
R^+ &= \sum_{\bs D \in \mc D^+} \ \sum_{\substack{d < D^\e\\
d\mid P(D^{\e^2})}} \Lambda_d^+(\e, D) H_d(\mc E, Z, \e, \bs D),
 \end{align*}
with some coefficients $\phi_d^+(D^\e)$ and $\Lambda_d^+(\e, \bs D)$ bounded by $1$ in absolute value. Here
 \[
H_d(\mc E, Z, \e, \bs D) = \sum_{D_i \le p_i < D_i^{1+\eta}, \, p_i < Z \ (1 \le i \le r)} r(\mc E, dp_1\ldots p_r).
 \]
 \end{lem}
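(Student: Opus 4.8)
The plan is to derive \cref{lem12} directly from Iwaniec's Theorem~4 in \cite{iwan} by unwinding his notation and specializing the parameters to the linear sieve setting $\kappa = 1$, $M = 2X^\alpha$, $S = 1$ (as indicated in the statement preceding the lemma). First I would recall that Iwaniec's formulation expresses the upper-bound sieve weights $\lambda_d^+$ through a combinatorial decomposition indexed by tuples $\bs D = (D_1,\ldots,D_r)$ lying in a lattice of scales, with the constraint $D_1\cdots D_{2\ell} D_{2\ell+1}^3 < D$ for $0 \le \ell \le (r-1)/2$ encoding precisely the "$D^+$-condition" that makes the truncated sieve an upper bound. The set $\mc G$ of admissible scales $D^{\e^2(1+\eta)^n}$ is the discretization used to keep the number of tuples in $\mc H$ at most $(\log D)^{O(\e^{-2})}$, and the exceptional term $E < C(\e + \e^{-8}(\log D)^{-1/3})$ is Iwaniec's bound for the error between $F$ and the true sifting function at this level of truncation.

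The key steps, in order, are: (1) quote Iwaniec's Theorem~4 verbatim with the fundamental lemma applied at level $D^{\e^2}$, so that the small divisors $d \mid P(D^{\e^2})$ are handled by a Brun-type weight $\phi_d^+(D^\e)$ of modulus $\le 1$, producing the main term $V(Z)X F(\log D/\log Z)$ together with the remainder $R_1^+$; (2) decompose the remaining range of moduli by Buchstab iteration, which after $r$ steps produces products $p_1 \cdots p_r$ with each $p_i$ in a dyadic-type range $[D_i, D_i^{1+\eta})$, $p_i < Z$, and the sign of the contribution controlled by the $\mc D^+$-condition; (3) collect the resulting terms into $H_d(\mc E, Z, \e, \bs D)$ and absorb all remaining bounded combinatorial coefficients into $\Lambda_d^+(\e, \bs D)$; (4) verify that the parameter choice $M = 2X^\alpha$, $S = 1$ in \cite[Lemma 14]{bakwein} reduces exactly to this shape, so that nothing beyond bookkeeping is needed. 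Since this is a direct citation with renaming of variables, there is no genuinely new content; the work is entirely in matching conventions.

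The main obstacle I expect is notational rather than mathematical: Iwaniec's paper uses a somewhat different indexing (his $\mc D^+$ is phrased in terms of a sequence condition on the $p_i$ themselves rather than on the scales $D_i$), and \cite{bakwein} introduces the variant with the extra parameter $M$ that we are specializing. Care is needed to confirm that the truncation level $D^\e$ for $d$, the inner truncation $D^{\e^2}$ for the fundamental lemma, and the scale spacing $(1+\eta)$ are mutually consistent so that the error term genuinely has the stated form $C(\e + \e^{-8}(\log D)^{-1/3})$ and the number of tuples $\bs D$ contributing is at most a power of $\log D$. Once those consistency checks are done, the conclusion is immediate, so I would present the proof as a one-line deduction: "This is the case $M = 2X^\alpha$, $S = 1$ of \cite[Lemma 14]{bakwein}, which in turn specializes Theorem~4 of \cite{iwan}," and leave the reader to consult those sources for the verification of the combinatorial identities.
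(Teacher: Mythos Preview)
Your overall approach is right and matches the paper: \cref{lem12} is not proved in the text at all but simply stated as the needed form of Iwaniec's Theorem~4, so a one-line citation is exactly what is expected.

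One correction, though: the reference to \cite[Lemma~14]{bakwein} and the specialization ``$M = 2X^\alpha$, $S = 1$'' is misplaced. That description belongs to \cref{lem5} (the Harman-sieve input), not to \cref{lem12}. The linear-sieve lemma here comes straight from \cite{iwan} with no detour through \cite{bakwein}; the parameters $M$ and $S$ do not appear in this context. So your final sentence should read simply ``This is a form of Theorem~4 of \cite{iwan},'' dropping the Baker--Weingartner intermediary. The rest of your plan (matching the $\mc D^+$-condition, the fundamental-lemma truncation at level $D^{\e^2}$, and the error term $E$) is accurate bookkeeping and consistent with Iwaniec's formulation.
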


In our application, we shall take
 \[
\mc E = \mc A_p
 \]
for any $p \in [P, P')$, and
 \[
X = 2\d|\mc B_p|.
 \]
We write $P = N^\g$. We shall take $D = N^{\frac 23 - 8\s - \g - 2\e}$ and $Z = N^{\frac 1{12} + \frac \s 2}$. It is easily verified that $D^{\frac 13} \le Z \le D^{1/2}$, so that
 \begin{equation}\label{eq5.3}
V(Z) F\left(\frac{\log D}{\log Z}\right) = \frac 2{\log D}\, (1 + \l \e). 
 \end{equation}
We apply \cref{lem11} and sum over $p$, obtaining (on noting that $P > D^\d Z$)
 \begin{align}\label{eq5.4}
\sum_{P \le p < P'} &S(\mc A_p, Z)\\[2mm]
&\le V(Z) \left\{F\left(\frac{\log D}{\log Z}\right) + C\e\right\} \sum_{P \le p < P'} 2\d |\mc B_p| + E_1 + E_2.\notag
 \end{align}
Here
 \begin{align*}
E_1 &= \sum_{\substack{d < D^\e\\
d\mid P(D^{\e^2})}} \ \sum_{P \le p < P'} r(\mc A, pd),\\[2mm]
E_2 &= \sum_{\substack{d < D^\e\\
d\mid P(D^{\e^2})}} \ \sum_{P \le p < P'} \sum_{\bs D \in \mc D^+} \Lambda_d^+ (\e, \bs D) \ \sum_{\substack{
D_i \le p_i < D_i^{1+\eta}\, (1 \le i \le r)\\
p_i < Z \ (1 \le i \le r)}} r(\mc A, pdp_1 \ldots p_r).
 \end{align*}

We shall show that
 \begin{equation}\label{eq5.5}
E_2 \ll \d N^{1-\eta};
 \end{equation}
the proof that $E_1 \ll \d N^{1-\eta}$ is similar but simpler.

Reducing the task of bounding $Z$ to estimating exponential sums as in previous sections, it suffices to prove that for $|c_\ell| \le 1$, and a fixed $\bs D \in \mc H$, we have
 \begin{align}
\sum_{\ell \le L} c_\ell \sum_{\substack{d < D^\e\\
d\mid P(D^{\e^2})}}& \Lambda_d^+(\e,\bs D) \sum_{
P \le p < P'} \ \sum_{\substack{
D_i\le p_i < \min(D_i^{1+\eta}, Z)\\
(1 \le i \le r)}}\label{eq5.6}\\[2mm]
&\sum_{\frac N2 < dp_1\ldots p_r n \le N} e(\ell g(dpp_1\ldots p_rn)) \ll N^{1-3\eta}.\notag
 \end{align}
This is a consequence of \cref{thm2}, grouping the variables as $v = dp_1\ldots p_r$ and $p$,
 \[
v \ll N^\e D^{1+\eta} \ll N^{\frac 23 - 8\s - \g}, p \le 2N^\g.
 \]
We make the usual verifications:
 \begin{align*}
3\left(\frac 23 - 8\s - \g\right) &+ 2\g = 2 - 24\s - \g < \frac 32 - 3\s,\\[2mm]
\left(\frac 23 - 8\s - \g\right) &+ 3\g = \frac 23 - 8\s + 2\g\\[2mm]
&\le \frac 23 - 8\s + \frac 34 + \frac{33\s}2 < \frac 74 - \frac{15\s}2,\\[2mm]
\left(\frac 23 - 8\s - \g\right) &+ \g = \frac 23 - 8\s.
 \end{align*}
The desired bound \eqref{eq5.5}, and the same bound for $E_2$, now follows.  Now \eqref{eq5.3}, \eqref{eq5.4} yield
 \begin{equation}\label{eq5.7}
\sum_{P \le p < P'} S(\mc A_p, Z) \le \frac{2(1+C\e)}{\frac 23 - 8\s - \g} \, \d N \, \sum_{P \le p < P'} \frac 1p.
 \end{equation}
  \bigskip
 
 \section{The sieve decomposition}\label{sec6}

For the final stage of our work we take $\rho = \frac{37}{210}+\e$ so that $\s = \frac 1{105} + \e$. In this section, each $S_j$ $(j \ge 0)$ that occurs takes the form
 \[
S_j = \sum_{\substack{1 \le r \le 8\\
1 \le t \le 8}} \ \ \sideset{}{^*}\sum_{p_1\ldots p_rp_1'\ldots p_t' \in \mc A} 1
 \]
 \newpage
\noindent where the asterisk indicates a restriction of $p_1\ldots p_r$ and $p_1' \ldots p_t'$ to certain subsets of $[1,N]$ depending on $j$; $S_j'$ is obtained from $S_j$ on replacing $\mc A$ by $\mc B$. For some of these values of $j$ we write
 \begin{equation}\label{eq6.1}
S_j = K_j + D_j
 \end{equation}
where $K_j$ is defined by the following additional condition of summation within $S_j$: \textit{a subproduct $R$ of $p_1\ldots p_r$ $p_1'\ldots p_t'$ satisfies}
 \begin{equation}\label{eq6.2}
N^\rho \ll R \ll N^f.
 \end{equation}
We split up $S_j'$ as $S_j' = K_j' + D_j'$ in the same way. As noted in Section 5,
 \begin{align}
S_j \ge K_j &\ge 2\d K_j' \ (1 - C\e)\label{eq6.3}\\[2mm]
&\ge 2\d K_j' - \frac{C\d \e N}{\log N}\notag\\[2mm]
&\ge 2\d S_j' - 2\d D_j' - \frac{C\d \e N}{\log N}\notag
 \end{align}
whenever \eqref{eq6.1} is used.

Conversion of sums into integrals, with an acceptable error, in the following is along familiar lines (see \cite{har2}). Concerning Buchstab's function $\omega(t)$, we note that
 \begin{equation}\label{eq6.4}
\omega(t) \le \k \text{ for } t \ge \frac 1\k
 \end{equation}
provided that $\k \ge 0.5672$, and that
 \begin{equation}\label{eq6.5}
\omega(t) \ge e^{-\g} - 2.1 \times 10^{-8} \quad (t \ge 6);
 \end{equation}
see Cheer and Goldston \cite{cheergold}.

Let $S_0 = S(\mc A, (2N)^{1/2})$. Using Buchstab's identity and writing $p_i = (2N)^{\a_i}$, we have
 \begin{equation}\label{eq6.6}
S_0 = S_1 - \sum_{j=2}^7 S_j
 \end{equation}
where

 \begin{gather*}
S_1 = S(\mc A, z),\\[2mm]
S_j = \sum_{\a_1 \in I_j} S(\mc A_{p_1}, p_1) \quad (2 \le j \le 7),
 \end{gather*}
with $I_2 = \left[\frac 16 - 5\s, \frac 16 + \s\right)$,$I_3 = \left(\frac 16 + \s, \frac 13 - 4\s\right)$, $I_4 = \left[\frac 13 - 4\s, \frac 13 + 8\s\right)$, $I_5 = \left[\frac 13 + 8\s, \frac 12 - 9\s\right)$, $I_6 = \left[\frac 12 - 9\s, \frac 38 + \frac{33\s}4\right)$ and $I_7 = \left[\frac 38 + \frac{33\s}4, \frac 12\right]$.
 \medskip

Recalling \eqref{eq4.3} and Lemmas \ref{lem7} (with $\nu = 0$) and \ref{lem11}, we have
 \begin{equation}\label{eq6.7}
S_j = 2\d S_j' (1 + \l \e) 
 \end{equation}
for $j=1, 3, 5$. For $j=2$, we apply Buchstab's identity three further times to obtain
 \begin{equation}\label{eq6.8}
S_2 = S_8 - S_9 + S_{10} - S_{11}
 \end{equation}
where
 \begin{gather*}
S_8 = \sum_{b \le \a_1 < \rho} S(\mc A_{p_1}, z), \ S_9 = \sum_{b \le \a_2 \le \a_1 < \rho} S(\mc A_{p_1p_2}, z),\\[2mm]
S_{10} = \sum_{b \le \a_3 \le \a_2 \le \a_1 < \rho} S(\mc A_{p_1p_2p_3}, z)\\
\intertext{and}
S_{11} = \sum_{b \le \a_4 \le \a_3 \le \a_2 \le \a_1 < \rho} S(\mc A_{p_1p_2p_3p_4}, p_4). 
 \end{gather*}

We can apply \cref{lem7} to $S_8$, $S_9$, $S_{10}$ to obtain \eqref{eq6.7}: for example, in $S_{10}$,
 \begin{align*}
3\rho + 2(\a_1 + \a_2) &\le 7\rho \le \frac 32 - 2\s\\[2mm]
\rho + 3(\a_1 + \a_2) &\le 7\rho \le \frac 74 - \frac{15\s}2,\\[2mm]
\rho + (\a_1 + \a_2) &\le 3\rho \le \frac 23 - 8\s.
 \end{align*}
We treat $S_{11}$ via \eqref{eq6.3}.

For $j=4$ we apply Buchstab's identity once. Iterating once more for part of the sum over $p_1$, $p_2$, this gives
 \begin{equation}\label{eq6.9}
S_4 = S_{12} - S_{13} - S_{14} + S_{15}, 
 \end{equation}
where
 \begin{align*}
S_{12} &= \sum_{\a_1\in I_4} S(\mc A_{p_1},z), \ S_{13} = \sum_{\substack{\a_1 \in I_4\\
\a_2\in \left[\frac f2, \a_1\right)}} S(\mc A_{p_1p_2},p_2),\\[2mm]
S_{14} &= \sum_{\substack{\a_1 \in I_4\\
\a_2 \in \left[b, \frac f2\right)}} S(\mc A_{p_1p_2}, z), \ S_{15} = \sum_{\substack{\a_1\in I_4\\
b \le \a_3 \le \a_2 < f/2}} S(\mc A_{p_1p_2p_3}, p_3). 
 \end{align*}

We have \eqref{eq6.7} for $S_{12}$, $S_{14}$ since \cref{lem9} is applicable. For example, for $S_{14}$ we have
 \begin{align*}
3\rho + 2\a_2 &\le \frac 12 + 3\s + \frac 23 + 16\s < \frac 32 - 3\s,\\[2mm]
\rho + 3\a_2 &\le \frac 16 + \s + 1 + 24\s < \frac 74 - \frac{15\s}2,\\[2mm]
\rho + \a_2 &\le \frac 16 + \s + \frac 13 + 8\s < \frac 23 - 8\s. 
 \end{align*}
We have \eqref{eq6.7} also for $S_{15}$, this time using \eqref{eq4.3}, since $\rho \le 2b \le \a_2 + \a_3 < f$ in $S_{15}$. For $S_{13}$, we use the lower bound \eqref{eq6.3}.

We also apply Buchstab once more to $S_7$,
 \begin{equation}\label{eq6.10}
S_7 = S_{16} - S_{17} 
 \end{equation}
where
 \[
S_{16} = \sum_{\a_1 \in I_7} S(\mc A_{p_1}, z)
 \]
satisfies \eqref{eq6.7} by \cref{lem10}, and
 \[
S_{17} = \sum_{\substack{
\a_1 \in I_7\\
b\le \a_2 < \a_1}} S(\mc A_{p_1p_2},p_2)
 \]
is bounded below as in \eqref{eq6.3}.

For $S_6$, we proceed differently. We have
 \begin{equation}\label{eq6.11}
S_6 + S_{18} = \sum_{\a_1\in I_6} S(\mc A_{p_1}, Z)
 \end{equation}
where
 \[
S_{18} = \left|\left\{p_1 p_1' \ldots p_r' \in \left(\frac N2, N\right]: \a_1 \in I_6, r \ge 2, Z \le p_1' \le \cdots \le p_r'\right\}\right| 
 \]
is treated as in \eqref{eq6.3}.

By \eqref{eq5.7},
 \begin{equation}\label{eq6.12}
\sum_{\a_1\in I_6} S(\mc A_{p_1}, Z) \le \frac{\d N(1 + C\e)}{\log N} \int_{I_7} \frac{d\a_1}{\a_1} \ \frac 2{\frac 23 - 8\s - \a_1}. 
 \end{equation}
Moreover,
 \begin{equation}\label{eq6.13}
S_6' + S_{18}' = \frac{\d N}{\log N}\, (1 + \l \e) \int_{I_7} \frac{d\a_1}{\a_1}\ \frac 2b\, \omega \left(\frac{1-\a_1}{b/2}\right).
 \end{equation}

Combining \eqref{eq6.11}--\eqref{eq6.13} and \eqref{eq6.3} with $j=18$,
 \begin{align}
S_6 \le 2\d S_6' + \frac{\d N}{\log N} &\int_{I_7} \frac{d\a_1}{\a_1}\, \left(\frac 2{\frac 23 - 8\s - \a_1} - \frac 2b\, \omega \left(\frac{1-\a_1}{b/2}\right)\right)\label{eq6.14}\\[2mm]
&+ 2\d D_{18}' + \frac{C\e \d N}{\log N}.\notag
 \end{align}

Our sieve decomposition, obtained by combining \eqref{eq6.6}, \eqref{eq6.8}, \eqref{eq6.9} and \eqref{eq6.10}, is
 \begin{align*}
S_0 &= S_1 - S_3 - S_5 - (S_8 - S_9 + S_{10} - S_{11})\\
&- (S_{12} - S_{13} - S_{14} + S_{15}) - (S_{16} - S_{17}) - S_6 
 \end{align*}
and also holds if $S_j$ is replaced by $S_j'$. Combining all applications of \eqref{eq6.7} and \eqref{eq6.3} with \eqref{eq6.14}, we end up with
 \begin{align}
S_0 \ge S_0' &- 2\d(D_{11}' + D_{13}' + D_{17}' + D_{18}')\label{eq6.15}\\[2mm]
&- \frac{\d N}{\log N} \int_{I_7} \frac{d\a_1}{\a_1} \left(\frac 2{\frac 23 - 8\s - \a_1} - \frac 2b\, \omega \left(\frac{1-\a_1}{b/2}\right)\right)\notag\\[2mm]
&\hskip 1.75in - \frac{C\e \d N}{\log N}.\notag
 \end{align}

Our next task is to evaluate $D_{11}'$, $D_{13}'$, $D_{17}'$, and $D_{18}'$ with sufficient accuracy. Using $2b > c$ we find that in $D_{11}$, $\a_3 + \a_4 \notin [c,f]$ implies $\a_3 + \a_4 > f$. With a little thought we find that
 \begin{align}
D_{11}' &\le \sum_{\substack{b \le \a_4 \le \a_3 \le \a_2 \le \a_1 < \rho,\, \a_3 + \a_4 > f\\
p_1p_2p_3p_4n_4 \in \left(\frac N2, N\right], \, p\mid n_4 \to p\ge p_4}}1\label{eq6.16}\\[2mm]
&\le (1 + \e)\, \frac N{2\log N}\, J_1,\notag\\
\intertext{where}
J_1 &= \iiiint\limits_{\substack{b \le w \le z \le y \le x < \rho\\
z + w > f}} \frac{dx}x \ \frac{dy}y \ \frac{dz}z \ \frac{dw}{w^2} \, \omega \left(\frac{1 - x - y - z - w}w\right).\notag
 \end{align}
It is simplest to replace the $\omega$ factor by $0.59775$ using \eqref{eq6.4}. Carrying out the $x$ and $w$ integrations (and using $z > f/2$) leads to
 \begin{equation}\label{eq6.17}
J_1 \le 0.59775 \int_{\frac f2 < z \le y \le \rho} \frac{dy}y \ \frac{dz}z \left(\frac 1{f-z} - \frac 1z\right)\log\, \frac \rho y < 0.000691. 
 \end{equation}

For $D_{13}'$, $D_{17}'$ we write the numbers $p_1,p_2, p_1', \ldots, p_t'$ that appear in $D_j'$ as
 \[
(2N)^{\a_1}, (2N)^{\a_2}, (2N)^{\b_1}, \ldots, (2N)^{\b_k}, (2N)^{\g_1}, \ldots, (2N)^{\g_\ell}
 \]
where $k \ge 0$, $\ell \ge 0$, $k + \ell \ge 1$, 
 \begin{gather*}
\a_2 \le \b_1 \le \cdots \le \b_k < \rho,\\[2mm]
f < \g_1 \le \cdots \le \g_\ell\\
\intertext{and}
\a_1 + \a_2 + \b_1 + \cdots + \b_k + \g_1 + \cdots + \g_\ell = 1 + \l \e.
 \end{gather*}

We now consider $D_{13}'$. First we treat the contribution $D^{(1)}$ from $\a_2 > f$. Thus $k=0$. We cannot have $\ell \ge 2$, since
 \[
4\left(\frac 13 - 4\s\right) > 1. 
 \]
If we consider $\ell=1$, we have $\a_1 + 2\a_2 < 1$ and obtain
 \begin{gather}
D^{(1)} = (1 + \l \e) \ \frac{\d N}{\log N} \ J_2,\label{eq6.18}\\
\intertext{where}
J_2 = \int_{\frac 13 - 4\s}^{\frac 13 + 8\s} \int_{\frac 13 - 4\s}^{\min\left(x, \frac{1-x}2\right)} \frac{dx\, dy}{xy(1 - x- y)} < 0.059343.\label{eq6.19}
 \end{gather}

Let $D^{(2)}$ be the contribution to $D_{13}$ from $\a_2 < \rho$. We have $\a_2 + \b_1 \ge f$, $\b_1 \ge f/2$, by an argument used above. We cannot have $\ell \ge 2$ since
 \begin{equation}\label{eq6.20}
3\left(\frac 13 - 4\s\right) + \frac 16 - 2\s = \frac 76 - 14\s > 1.
 \end{equation}
If $\ell = 1$, we must have $k \le 1$ (use \eqref{eq6.20} again). If $\ell = 0$, we must have $k \le 3$ similarly; however, $k>2$, since
 \[
\frac 13 + 8\s + 3\left(\frac 16 + \s\right) < 1.
 \]
The three remaining cases lead to
 \begin{equation}\label{eq6.21}
D^{(2)} = (1 + \l \e) (J_3 + J_4 + J_5),
 \end{equation}
where
 \begin{align}
J_3 &= \int_{\frac 13 - 4\s}^{\frac 13 + 8\s} \int_{\frac 16 - 2\s}^{\frac 16} \frac{dy dx}{xy(1-x-y)} < 0.118914 \quad (\ell = 1, k = 0),\label{eq6.22}\\[4mm]
J_4 &= \int_{\frac 13 - 4\s}^{\frac 13 + 8\s} \int_{\frac 16 - 2\s}^{\frac 16 + \s} \int_{\max\left(y, \frac 13 - 4\s - y\right)}^{\frac 16 + \s} \frac{dzdydx}{xyz(1-x-y-z)} < 0.027404 \quad (\ell = 1, k = 1),\label{eq6.23}\\[4mm]
J_5 &= \int_{\frac 13 - 4\s}^{\frac 13 + 8\s} \int_{\frac 16 - 2\s}^{\frac 16 + \s} \int_{\max\left(y, \frac 13 - 4\s - y\right)}^{\frac 16 + \s} \int_{\frac 56 - \s - (x+y+z)}^{(1 - x - y - z)/2} \frac{dwdzdydx}{xyzw(1-x-y-z-w)}\label{eq6.24}\\[2mm]
&< 0.000245 \quad (\ell = 0, k = 3).\notag
 \end{align}
(The integral was bounded above by an integral in $x$, $y$, $z$, using $1-x-y-z-w \ge w$, before carrying out the $w$ integration.)

In $D_{17}'$, we cannot have $\a_2 \ge \rho$, since then
 \[
\a_2 < \frac 12\, (1 - \a_1) < \frac 12 \left(\frac 58 - \frac{33\s}4\right) < \frac 13 - 4\s, 
 \]
which is absurd. Thus $\a_2 < \rho$; we cannot have $\ell \ge 2$ or $\ell = 1$, $k \ge 1$ or $\ell = 0$, $k \ge 3$ since
 \[
\frac 38 + \frac{33\s}4 + 2\left(\frac 13 - 4\s\right) > 1.
 \]
For $\ell = 0$, we have $k > 1$ since
 \[
\frac 12 + 2\left(\frac 16 + \s\right) < 1.
 \]
Thus
 \begin{equation}\label{eq6.25}
D_{17}' = (1 + \l \e) \, \frac N{2\log N}\, (J_6 + J_7),
 \end{equation}
where
 \begin{align*}
J_6 &= \int_{\frac 38 + \frac{33\s}4}^{\frac 12} \int_{\frac 16 - 5\s}^{\frac 16 + \s}\ \frac{dydx}{xy(1-x-y)} < 0.060205 \ (\ell = 1, k = 0);\\[4mm]
J_7 &= \underset{\frac 16 + \s > 1 - x - y - z > z}{\int_{\frac 38 + \frac{33\s}4}^{\frac 12} \int_{\frac 16 - 2\s}^{\frac 16 + \s} \int_y^{\frac 16 + \s}}\ \frac{dzdydx}{xyz(1-x-y-z)} < 0.000237\\[2mm]
&\hskip 3in (\ell = 0, k = 2).
 \end{align*}
 
In $D_{18}'$, we write the numbers that appear as $p_1, p_1', \ldots, p_r'$ as
 \[
(2N)^{\a_1}, (2N)^{\b_1} \le \cdots \le (2N)^{\b_k}, (2N)^{\g_1} \le \cdots \le (2N)^{\g_k}
 \]
where $k + \ell = r \ge 2$, $k \ge 0$, $\ell \ge 0$, $\b_k < \rho$, $\g_1 > f$,
 \[
\a_1 + \b_1 + \cdots + \b_k + \g_1 + \cdots + \g_k = 1 + \l_9\e.
 \]
We observe that
 \[
\frac 12 - 9\s + 2\left(\frac 13 - 4\s\right) > 1.
 \]
Thus we cannot have $\ell \ge 2$ or $\ell = 1$, $k \ge 2$ or $\ell = 0$, $k \ge 4$. We cannot have $\ell = 0$, $k \le 3$ since
 \[
\frac 38 + \frac{33\s}4 + 3\left(\frac 16 + \s\right) < 1.
 \]
Since $\ell + k \ge 2$, the only remaining possibility is $\ell = 1$, $k=1$, so that
 \begin{equation}\label{eq6.26}
D_{18}' = (1 + \l \e)\, \frac N{2\log N} \, J_8 
 \end{equation}
where
 \[
J_8 = \int_{\frac 12 - 9\s}^{\frac 38 + \frac{33\s}8} \int_{\frac 1{12} + \frac \d 2}^{\frac 16 + \s} \ \frac{dy dx}{xy(1-x-y)}.
 \]
Combining this with \eqref{eq6.14} we obtain
 \begin{equation}\label{eq6.27}
S_6 \le 2\d S_6' + \frac{\d N}{\log N} \ J_9,
 \end{equation}
with
 \begin{align}
J_9 &= \int_{I_7} \frac{dx}x \left(\frac 2{\frac 23 - 8\s - x} - \frac 2b\ \omega \left(\frac{1-\a_1}{b/2}\right) + \int_{\rho/2}^\rho \frac{dy}{y(1-x-y)}\right)\label{eq6.28}\\[2mm]
&< 0.727494\notag
 \end{align}
(we obtain this using \eqref{eq6.5}).  Since $S_0' = (1 + \l\e)$ $\frac N{2\log N}$, we only need to add up our integrals to obtain \cref{thm1} from \eqref{eq6.15}--\eqref{eq6.28}:
 \[
J_1 + J_2 + J_3 + J_4 + J_5 + J_6 + J_7 + J_9 < 0.994569 < 1 - \frac 1{200}.
 \]

\hskip .5in


\begin{thebibliography}{www}

\bibitem{bak} R. C. Baker, Fractional parts of polynomials over the primes, \textit{Mathematika}, to appear (2017). Available at arXiv:1606.05779.

\bibitem{bakhar} R. C. Baker and G. Harman, On the distribution of $\a p^k$ modulo one, \textit{Mathematika} \textbf{48} (1991), 170--184.

\bibitem{bakwein} R. C. Baker and A. Weingartner, A ternary Diophantine inequality over primes, \textit{Acta Arith.} \textbf{162} (2014), 159--196.

\bibitem{birdav} B. J. Birch and H. Davenport, On a theorem of Davenport and Heilbronn, \textit{Acta Math.} \textbf{100} (1958), 259--279.

\bibitem{cheergold} A. Y. Cheer and D. A. Goldston, A differential-delay equation arising from the sieve of Eratosthenes, \textit{Math. Comp.} \textbf{55} (1990), 129--141.

\bibitem{ghosh} A. Ghosh, The distribution of $\a p^2$ modulo 1, \textit{Proc. London Math Soc.} \textbf{42} (1981), 252--269.

\bibitem{har} G. Harman, On the distribution of $\a p$ modulo one II, \textit{Proc. London Math. Soc.} \textbf{72} (1996), 241--260.

\bibitem{har2} G. Harman, \textit{Prime-detecting Sieves}, Princeton University Press, Princeton, N.J. 2007.

\bibitem{hb} D. R. Heath-Brown, Prime numbers in short intervals and a generalized Vaughan identity, \textit{Can. J. Math.} \textbf{34} (1982), 1365--1377.

\bibitem{iwan} H. Iwaniec, A new form of the error term in the linear sieve, \textit{Acta Arith.} \textbf{37} (1980), 307--320.

\bibitem{vin}I. M. Vinogradov, On the estimate of trigonometric sums with prime numbers, \textit{Izvestiya Akad. Nauk. SSSR Ser. Mat.} \textbf{12} (1948), 225--248.

 \end{thebibliography}
 \end{document}